\documentclass[onefignum,onetabnum]{siamart171218}


\usepackage{amsfonts}
\usepackage{amssymb}
\usepackage{graphicx}
\usepackage{epstopdf}
\usepackage{algorithmic}
\ifpdf
  \DeclareGraphicsExtensions{.eps,.pdf,.png,.jpg}
\else
  \DeclareGraphicsExtensions{.eps}
\fi

\usepackage{bm}
\usepackage{tikz-cd}
\usepackage{subfig}
\usepackage{mathtools}
\usepackage{stmaryrd}
\setlength{\hoffset}{.6in}


\newsiamremark{remark}{Remark}
\newsiamremark{assumption}{Assumption}

\newsiamremark{hypothesis}{Hypothesis}
\crefname{hypothesis}{Hypothesis}{Hypotheses}
\newsiamthm{claim}{Claim}

\def\XXint#1#2#3{{\setbox0=\hbox{$#1{#2#3}{\int}$ }
\vcenter{\hbox{$#2#3$ }}\kern-.6\wd0}}

\newcommand{\tr}{{\rm tr}}

\headers{$C^0$ (non-Langrange) FEM for PDEs with Cordes Coefficients}{S. Wu}

\title{$C^0$ Finite Element Approximations of Linear Elliptic
Equations in Non-divergence Form and Hamilton-Jacobi-Bellman Equations
with Cordes Coefficients
\thanks{
The work of Shuonan Wu is supported in part by the National Natural
Science Foundation of China grant No.  11901016 and the startup grant
from Peking University. }
}

\author{
Shuonan Wu\thanks{School of Mathematical Sciences,
  Peking University, Beijing 100871, China 
(\email{snwu@math.pku.edu.cn}, \url{http://dsec.pku.edu.cn/\~snwu}).}
}

\usepackage{amsopn}

\ifpdf
\hypersetup{
  pdftitle={$C^0$ (non-Lagrange) FEM for PDEs with Cordes Coefficients},
  pdfauthor={S. Wu}
}
\fi


\begin{document}

\maketitle

\begin{abstract}
  This paper is concerned with $C^0$ (non-Lagrange) finite element
  approximations of the linear elliptic equations in non-divergence
  form and the Hamilton-Jacobi-Bellman (HJB) equations with Cordes
  coefficients.  Motivated by the Miranda-Talenti estimate, a discrete
  analog is proved once the finite element space is $C^0$ on the
  $(n-1)$-dimensional subsimplex (face) and $C^1$ on
  $(n-2)$-dimensional subsimplex.  The main novelty of the
  non-standard finite element methods is to introduce an interior
  penalty term to argument the PDE-induced variational form of the
  linear elliptic equations in non-divergence form or the HJB
  equations. As a distinctive feature of the proposed methods, no
  penalization or stabilization parameter is involved in the
  variational forms. As a consequence, the coercivity constant (resp.
  monotonicity constant) for the linear elliptic equations in
  non-divergence form (resp. the HJB equations) at discrete level is
  exactly the same as that from PDE theory.  The quasi-optimal order
  error estimates as well as the convergence of the semismooth Newton
  method are established.  Numerical experiments are provided to
  validate the convergence theory and to illustrate the accuracy and
  computational efficiency of the proposed methods.
\end{abstract}

\begin{keywords}
  Elliptic PDEs in non-divergence form, Hamilton-Jacobi-Bellman
  equations, Cordes condition, $C^0$ (non-Lagrange) finite element
  methods
\end{keywords}

\begin{AMS}
65N30, 65N12, 65N15,  35D35, 35J15, 35J66
\end{AMS}

\section{Introduction}
In this paper, we study the $C^0$ (non-Lagrange) finite element
approximations of the linear elliptic equations in non-divergence form 
\begin{equation} \label{eq:nondiv}
Lu := A: D^2 u = f \quad \text{in }\Omega, \qquad  
u = 0 \quad \text{on }\partial \Omega,
\end{equation}
and the Hamilton-Jacobi-Bellman (HJB) equations 
\begin{equation} \label{eq:HJB}
\sup_{\alpha \in \Lambda} (L^\alpha u - f^\alpha) = 0 
\quad \text{in }\Omega, \qquad 
u = 0 \quad \text{on }\partial\Omega,
\end{equation}
where $\Lambda$ is a compact metric space, and  
$$ 
L^\alpha v := A^\alpha : D^2 v + \bm{b}^\alpha \cdot \nabla
v - c^\alpha v.
$$ 
Here, $D^2u$ and $\nabla u$ denote the Hessian and gradient of
real-valued function $u$, respectively. $\Omega$ is a bounded, open,
convex polytope in $\mathbb{R}^n~(n=2,3)$.  Problems \eqref{eq:nondiv}
and \eqref{eq:HJB} arise in many applications from areas such as
probability and stochastic processes. In particular, the HJB equations
\eqref{eq:HJB}, which are of the class of fully nonlinear second order
partial differential equations (PDEs), play a fundamental role in the
field of stochastic optimal control \cite{fleming2006controlled}. In
the study of fully nonlinear second order PDEs, linearization
techniques naturally lead to problems such as \eqref{eq:nondiv}
\cite{caffarelli1997properties}.

In contract to the PDEs in divergence form, the theory of linear
elliptic equations in non-divergence form and, more generally, fully
nonlinear PDEs, hinges on different solution concepts such as strong
solutions, viscosity solutions, or $H^2$ solutions. For these
different solution concepts, numerical methods for \eqref{eq:nondiv}
and \eqref{eq:HJB} have experienced some rapid developments in recent
years. In \cite{feng2017finite, feng2018interior}, the authors proposed
non-standard primal finite element methods for \eqref{eq:nondiv} with
the coefficient matrix $A \in C^0(\bar{\Omega}; \mathbb{R}^{n\times
n})$. They showed the convergence in the sense of $W^{2,p}$ strong
solutions by establishing the discrete Calderon-Zygmund estimates. In
\cite{nochetto2018discrete}, the authors studied a two-scale method
based on the integro-differential formulation of \eqref{eq:nondiv},
where the monotonicity under the weakly acute mesh condition and
discrete Alexandroff-Bakelman-Pucci estimates were established to
obtain the pointwise error estimates.  In the Barles-Souganidis
framework \cite{barles1991convergence}, the monotonicity is a key
concept of numerical schemes for the convergence to the viscosity
solutions of fully nonlinear PDEs, which is also applicable to the
monotone finite difference methods (FDM) for the HJB equations
\cite{bonnans2003consistency, fleming2006controlled}. A monotone
finite element like scheme for the HJB equations was proposed in
\cite{camilli2009finite}.  The convergent monotone finite element
methods (FEM) for the viscosity solutions of parabolic HJB equations
were proposed and analyzed in \cite{jensen2013convergence,
jensen2017finite}. For a general overview, we refer the reader to the
survey articles \cite{feng2013recent, neilan2017numerical}.  Recently
a narrow-stencil FDM for the HJB equations based on the generalized
monotonicity was proposed in \cite{feng2019narrow}, where the
numerical solution was proved to be convergent to the viscosity
solution. 

For the PDE theory of $H^2$ solutions, it is allowable to have the
discontinuous coefficient matrix $A$ in \eqref{eq:nondiv}.  As a compensation, 
the coefficients are required to satisfy the {\it Cordes
condition} stated below in Definition \ref{df:nondiv-Cordes} for
\eqref{eq:nondiv} and Definition \ref{df:HJB-Cordes} for
\eqref{eq:HJB}, respectively. We refer the reader to
\cite{maugeri2000elliptic} for the analysis of PDEs with discontinuous
coefficients under the Cordes condition. In addition, the analysis of
the problems \eqref{eq:nondiv} and \eqref{eq:HJB} hinges on the
following Miranda-Talenti estimate. 
\begin{lemma}[Miranda-Talenti estimate] \label{lm:MT}
Suppose that $\Omega \subset
\mathbb{R}^n$ is a bounded convex domain. Then, for any $v \in
H^2(\Omega) \cap H_0^1(\Omega)$, 
\begin{equation} \label{eq:MT}
|v|_{H^2(\Omega)} \leq \|\Delta v\|_{L^2(\Omega)}.
\end{equation}
\end{lemma}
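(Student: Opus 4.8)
The plan is to prove \eqref{eq:MT} first for a smooth function on a domain with $C^2$ boundary, by integrating by parts twice, and then to recover the stated generality by approximation.

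Assume to begin with that $\partial\Omega\in C^2$ and $v\in C^\infty(\bar\Omega)$ with $v|_{\partial\Omega}=0$. Expanding $|D^2v|^2=\sum_{i,j}(\partial_{ij}v)^2$ and $(\Delta v)^2=\sum_{i,j}\partial_{ii}v\,\partial_{jj}v$ and integrating by parts twice yields the identity
\[
\int_\Omega (\Delta v)^2\,dx=\int_\Omega |D^2v|^2\,dx+\int_{\partial\Omega}\Big(\Delta v\,(\nabla v\cdot\nu)-(D^2v\,\nu)\cdot\nabla v\Big)\,dS,
\]
where $\nu$ is the outward unit normal. Since $v$ vanishes on $\partial\Omega$ its tangential gradient vanishes there, so $\nabla v=(\partial_\nu v)\,\nu$ on $\partial\Omega$; differentiating the identity $v|_{\partial\Omega}=0$ twice along $\partial\Omega$ relates the purely tangential Hessian of $v$ to $\partial_\nu v$ and the second fundamental form of $\partial\Omega$, and after substitution the boundary integrand collapses to $H\,(\partial_\nu v)^2$, with $H\ge0$ the mean curvature of the convex hypersurface $\partial\Omega$. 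Hence $\int_\Omega(\Delta v)^2\ge\int_\Omega|D^2v|^2$, i.e.\ \eqref{eq:MT}; the identity in fact quantifies the gap. To reach general $v\in H^2(\Omega)\cap H_0^1(\Omega)$ on such a domain, I would invoke density of $\{w\in C^\infty(\bar\Omega):w|_{\partial\Omega}=0\}$ in $H^2(\Omega)\cap H_0^1(\Omega)$ for the $H^2$-norm together with the continuity of $v\mapsto|v|_{H^2(\Omega)}$ and $v\mapsto\|\Delta v\|_{L^2(\Omega)}$, and pass to the limit.

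For the convex polytope $\Omega$ actually at hand, the boundary computation is even more transparent: for smooth $v$ with $v|_{\partial\Omega}=0$, Gauss--Green on the Lipschitz domain $\Omega$ splits the surface integral over the flat faces, and on a face $F$ with constant normal $\nu$ one has $\Delta v=\partial_{\nu\nu}v$ and $(D^2v\,\nu)\cdot\nabla v=\partial_{\nu\nu}v\,\partial_\nu v$ along $F$, so the integrand vanishes identically — in fact $|v|_{H^2(\Omega)}=\|\Delta v\|_{L^2(\Omega)}$ — and one concludes again by the density argument above (alternatively, by exhausting $\Omega$ from within by smooth convex domains $\Omega_m\uparrow\Omega$ and approximating $v$ by suitably dilated and mollified functions in $H^2(\Omega_m)\cap H_0^1(\Omega_m)$). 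I expect the only genuine difficulty to lie in this passage to nonsmooth domains and rough data: approximating a given $v\in H^2\cap H_0^1$ in the $H^2$-norm by smooth functions that still vanish on $\partial\Omega$ — equivalently, correcting the boundary traces of smooth $H^2$-approximants by a lifting without spoiling their norms, or handling the domain approximation and the homogeneous boundary condition simultaneously. By comparison, the integration-by-parts identity and the sign of the curvature term are routine.
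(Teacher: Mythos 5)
The paper does not actually prove Lemma~\ref{lm:MT}; it quotes it from \cite{maugeri2000elliptic} and only alludes to the mechanism (the gap between $\|\Delta v\|_{L^2}^2$ and $|v|_{H^2}^2$ is a mean-curvature boundary term). Your smooth-domain computation is exactly that classical argument: two integrations by parts give the boundary integral $\int_{\partial\Omega}\bigl(\Delta v\,\partial_\nu v-(D^2v\,\nu)\cdot\nabla v\bigr)\,dS$, which for $v|_{\partial\Omega}=0$ collapses to $\int_{\partial\Omega}\mathcal{H}\,(\partial_\nu v)^2\,dS\ge 0$ on a convex $C^2$ boundary, and vanishes identically on flat faces. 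That part is correct and is the intended proof.

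The genuine gap is the step you yourself flag and then leave open: passing to arbitrary $v\in H^2(\Omega)\cap H_0^1(\Omega)$ on a general bounded convex domain (the lemma is not restricted to $C^2$ boundaries or polytopes). For a nonsmooth convex domain, the density of $\{w\in C^\infty(\bar\Omega):w|_{\partial\Omega}=0\}$ in $H^2(\Omega)\cap H_0^1(\Omega)$ is not a fact you can simply invoke: its standard proof goes through $H^2$ regularity of the Dirichlet Laplacian on convex domains, which is itself normally derived \emph{from} the Miranda--Talenti a priori bound, so as written your argument is circular precisely where the content of the lemma lies. Your fallback of dilating and mollifying $v$ on an exhaustion by smooth convex subdomains does not obviously preserve the homogeneous Dirichlet condition on each subdomain, and without that condition the boundary term has no sign. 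The standard non-circular route argues through the equation rather than the function: set $f:=\Delta v\in L^2(\Omega)$, choose smooth convex domains $\Omega_m\uparrow\Omega$, let $v_m\in H_0^1(\Omega_m)$ solve $\Delta v_m=f$ (so $v_m\in H^2(\Omega_m)$ by smooth-domain elliptic regularity), apply the smooth-domain estimate to get $|v_m|_{H^2(\Omega_m)}\le\|f\|_{L^2(\Omega)}$ uniformly in $m$, identify the weak limit of the zero-extensions of $v_m$ with $v$ via uniqueness of the $H_0^1$ solution, and conclude by weak lower semicontinuity of the seminorm. With that replacement your outline becomes the complete classical proof; also note that for the convex polytope you then only obtain the inequality for general $v\in H^2\cap H_0^1$, while the exact equality you assert there is again contingent on the unproven density statement.
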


For the finite element approximations of $H^2$ solutions, 
the most straightforward way is to apply the $C^1$-conforming finite
elements \cite{ciarlet1978finite}, which are sometimes considered
impractical on unstructured meshes (at least $\mathcal{P}_5$ in 2D and
$\mathcal{P}_9$ in 3D).  It is worth mentioning that the $H^2$
nonconforming elements would fail to mimic the Miranda-Talenti
estimate at the discrete level. For example, a direct calculation
shows that three basis functions of the Morley element
\cite{ciarlet1978finite} are harmonic.  

Instead of the $C^1$-conforming finite element methods, the first
discontinuous Galerkin (DG) method for \eqref{eq:nondiv} in the case
of discontinuous coefficients with Cordes condition was proposed
in \cite{smears2013discontinuous}, which has been extended to the
elliptic and parabolic HJB equations in \cite{smears2014discontinuous,
smears2016discontinuous}. In \cite{neilan2019discrete}, the authors
developed the $C^0$-interior penalty DG methods to both
\eqref{eq:nondiv} and \eqref{eq:HJB}. The above DG methods are
applicable when choosing the penalization parameters suitably
large. A mixed method for \eqref{eq:nondiv} based on the stable finite
element Stokes spaces was proposed in \cite{gallistl2017variational},
which has been extended to the HJB equations in
\cite{gallistl2019mixed}. Other numerical methods for
\eqref{eq:nondiv} include the discrete Hessian method
\cite{lakkis2011finite}, weak Galerkin method \cite{wang2018primal},
and least square methods \cite{li2019sequential, qiu2019adaptive}.

The primary goal of this paper is to develop and analyze the $C^0$
primal finite element approximations of \eqref{eq:nondiv} and
\eqref{eq:HJB} without introducing any penalization parameter.  In
view of the proof of Miranda-Talenti estimate
\cite{maugeri2000elliptic}, the difference between $\|\Delta
u\|_{L^2(\Omega)}^2$ and $|u|_{H^2(\Omega)}^2$ is a positive term
that involves the mean curvature of $\partial \Omega$.  The key idea
in this paper is to adopt the $C^0$ finite element with the enhanced
regularity on some subsimplex so as to have the ability to detect the
information of mean curvature. More precisely, we show in Section
\ref{sec:FEM} that, a feasible condition pertaining to the finite
element is the $C^1$-continuity on the $(n-2)$-dimensional subsimplex.
In 2D case, a typical family of finite elements that meets this
requirement is the family of $\mathcal{P}_k$-Hermite finite elements
$(k\geq 3)$ depicted in Figure \ref{fig:Hermite-2Dk3}. In 3D case, the
family of Argyris finite elements with local space $\mathcal{P}_k (k
\geq 5)$ \cite{neilan2015discrete, christiansen2018nodal} satisfies
the condition.  

Having the families of 2D Hermite finite elements and 3D Argyris
finite elements, we prove a discrete analog of the Miranda-Talenti
estimate in Lemma \ref{lm:Hermite-MT}, on any conforming triangulation
of convex polytope $\Omega$. The jump terms in the discrete
Miranda-Talenti-type estimate \eqref{eq:Hermite-MT} naturally induce
the interior penalty in the variational form of the linear elliptic
equations in non-divergence form \eqref{eq:nondiv-bilinear} or the HJB
equations \eqref{eq:HJB-form}. However, in the same sprite of
\cite{feng2017finite}, the proposed methods are not the DG methods per
se since no penalization parameter is used. The convergence analysis
mimics the analysis of $H^2$ solutions to a great extent. As a
striking feature of the proposed methods, the coercivity constant
(resp. monotonicity constant) for the linear elliptic equations in
non-divergence form (resp. the HJB equations) at discrete level is
exactly the same as that from PDE theory. Since the methods are
consistent, the coercivity or monotonicity naturally leads to the
energy norm error estimates. 

The rest of the paper is organized as follows. In Section
\ref{sec:pre}, we establish the notation and state some preliminaries
results.  In Section \ref{sec:FEM}, we state the finite element spaces
on which the discrete Miranda-Talenti-type estimate holds.  We then
give the applications to the discretizations of the linear elliptic
equations in non-divergence form \eqref{eq:nondiv} and the HJB equations
\eqref{eq:HJB}, respectively, in Section \ref{sec:nondiv} and Section
\ref{sec:HJB}.  Numerical experiments are presented in Section
\ref{sec:numerical}. 

For convenience, we use $C$ to denote a generic positive
constant that may stand for different values at its different
occurrences but is independent of the mesh size $h$. The notation $X
\lesssim Y$ means $X \leq CY$. 

\section{Preliminaries} \label{sec:pre}

In this section, we first review the $H^2$ solutions to the linear
elliptic equations in non-divergence form and the HJB equations under the
Cordes conditions. Let $\Omega$ be a bounded, open, convex domain in
$\mathbb{R}^n~(n=2,3)$, in which the Miranda-Talenti estimate
\eqref{eq:MT} holds. We shall use $U$ to denote a generic subdomain of
$\Omega$ and $\partial U$ denotes its boundary. Given an integer $k
\geq 0$, let $H^k(U)$ and $H_0^k(U)$ denote the usual Sobolev spaces.
We also denote $V := H^2(\Omega) \cap H_0^1(\Omega)$. 

\subsection{Review of strong solutions to the linear elliptic equations in
non-divergence form}
For the problem \eqref{eq:nondiv}, it is assumed that $A\in
L^\infty(\Omega;\mathbb{R}^{n\times n})$, and that $L$ is uniformly
elliptic, i.e., there exists $\underline{\nu}, \bar{\nu} > 0$ such
that 
\begin{equation} \label{eq:nondiv-elliptic}
\underline{\nu}|\bm{\xi}|^2 \leq \bm{\xi}^tA(x)\bm{\xi} \leq \bar{\nu}
|\bm{\xi}|^2 \qquad \forall \bm{\xi} \in \mathbb{R}^n, \text{ a.e. in
}\Omega.
\end{equation} 
It is well-known that, the above assumptions can not guarantee the
well-posedness of \eqref{eq:nondiv}; See, for instance, the example in
\cite[p. 185]{gilbarg2015elliptic}. In addition, we assume the
coefficients satisfy the Cordes condition below. 

\begin{definition}[Cordes condition for \eqref{eq:nondiv}]
\label{df:nondiv-Cordes}
The coefficient satisfies that there is an $\varepsilon \in (0, 1]$
such that 
\begin{equation} \label{eq:nondiv-Cordes}
\frac{|A|^2}{(\tr A)^2} \leq \frac{1}{n-1 + \varepsilon} 
\qquad \text{a.e. in }\Omega.
\end{equation}
\end{definition}
In two dimensions, it is not hard to show that uniform ellipticity
implies the Cordes condition \eqref{eq:nondiv-Cordes} with
$\varepsilon = 2\underline{\nu} / (\underline{\nu} + \bar{\nu})$, see
\cite{smears2014discontinuous}.  Define the strictly positive function
$\gamma \in L^\infty(\Omega)$ by 
\begin{equation} \label{eq:nondiv-gamma}
\gamma := \frac{\tr A}{|A|^2}.
\end{equation}
Hence, the variational formulation of \eqref{eq:nondiv} reads 
\begin{equation} \label{eq:nondiv-B}
B_0(u, v) = \int_\Omega \gamma f \Delta v\,\mathrm{d}x \quad \forall v \in
V,
\end{equation}
where the bilinear form $B_0: V \times V \to \mathbb{R}$ is defined by
\begin{equation}  \label{eq:nondiv-B-def}
B_0(w,v):= \int_\Omega \gamma Lw \Delta v\,\mathrm{d}x \quad \forall w, v \in
V.
\end{equation}
The well-posedness hinges on the following lemma; See \cite[Lemma
1]{smears2013discontinuous} for a proof.

\begin{lemma} \label{lm:nondiv-Cordes-prop}
Under the Cordes condition \eqref{eq:nondiv-Cordes}, for any $v\in
H^2(\Omega)$ and open set $U \subset \Omega$, the following inequality
holds a.e. in $U$: 
\begin{equation} \label{eq:nondiv-Cordes-prop}
|\gamma L v - \Delta v| \leq \sqrt{1-\varepsilon} |D^2v|.
\end{equation}
\end{lemma}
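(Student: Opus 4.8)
The plan is to reduce the claimed inequality to a pointwise algebraic estimate and then apply the Cauchy--Schwarz inequality for the Frobenius inner product. First I would observe that the uniform ellipticity \eqref{eq:nondiv-elliptic} forces $\tr A > 0$ a.e.\ in $\Omega$, so that $\gamma$ in \eqref{eq:nondiv-gamma} is well defined and strictly positive, and one may argue at a fixed Lebesgue point $x \in U$. Introducing the matrix $M := \gamma A - I$ (evaluated at $x$), one has, since $Lv = A : D^2 v$ and $\Delta v = I : D^2 v$, the pointwise identity
\[
\gamma L v - \Delta v = (\gamma A - I) : D^2 v = M : D^2 v .
\]

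Next I would bound $|M : D^2 v|$. Because $D^2 v$ is symmetric, $M : D^2 v$ depends only on the symmetric part of $M$, whose Frobenius norm does not exceed $|M|$; hence Cauchy--Schwarz gives $|M : D^2 v| \le |M|\,|D^2 v|$. It then remains to show $|M|^2 \le 1 - \varepsilon$. A direct computation using $A : A = |A|^2$, $A : I = \tr A$, and the definition $\gamma = \tr A / |A|^2$ yields
\[
|M|^2 = |\gamma A - I|^2 = \gamma^2 |A|^2 - 2\gamma\,\tr A + n = n - \frac{(\tr A)^2}{|A|^2}.
\]

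Finally I would invoke the Cordes condition \eqref{eq:nondiv-Cordes}, rewritten equivalently as $(\tr A)^2 / |A|^2 \ge n - 1 + \varepsilon$, which gives $|M|^2 \le n - (n-1+\varepsilon) = 1 - \varepsilon$, and therefore $|\gamma L v - \Delta v| = |M : D^2 v| \le |M|\,|D^2 v| \le \sqrt{1-\varepsilon}\,|D^2 v|$ a.e.\ in $U$. There is no genuine obstacle in this argument: it is an elementary pointwise computation. The only two points that merit a word of care are the well-definedness and positivity of $\gamma$ (which come from ellipticity) and, when $A$ is not assumed symmetric, the passage to the symmetric part of $M$ against the symmetric tensor $D^2 v$, which costs nothing in the Frobenius norm and leaves the identity for $|M|^2$ unchanged since both $\tr A$ and $|A|$ are unaffected by the step.
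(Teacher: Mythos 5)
Your proof is correct and coincides with the standard argument: the paper does not prove this lemma itself but cites \cite[Lemma 1]{smears2013discontinuous}, and the proof there is precisely your reduction to the pointwise identity $\gamma Lv-\Delta v=(\gamma A-I):D^2v$, the Frobenius Cauchy--Schwarz inequality, and the computation $|\gamma A-I|^2=n-(\tr A)^2/|A|^2\le 1-\varepsilon$ under the Cordes condition. Your remarks on the positivity of $\tr A$ and on symmetrization are accurate and complete the argument.
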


Using the Miranda-Talenti estimate in Lemma \ref{lm:MT}, Lemma
\ref{lm:nondiv-Cordes-prop} and the Lax-Milgram Lemma, it is readily
seen that there exists a unique strong solution
to \eqref{eq:nondiv-B} under the Cordes condition
\eqref{eq:nondiv-Cordes}. The proof is given in \cite[p.
256]{maugeri2000elliptic}, see also \cite{smears2013discontinuous,
neilan2019discrete}. 

\subsection{Review of strong solutions to the HJB equation}
For the HJB equations \eqref{eq:HJB}, the coefficient $A^\alpha \in
C(\bar{\Omega} \times \Lambda;\mathbb{R}^{n\times n})$ is assumed
uniformly elliptic, i.e., there exists $\underline{\nu}, \bar{\nu} >
0$ such that 
\begin{equation} \label{eq:HJB-elliptic}
\underline{\nu}|\bm{\xi}|^2 \leq \bm{\xi}^tA^\alpha(x)\bm{\xi} \leq \bar{\nu}
|\bm{\xi}|^2 \qquad \forall \bm{\xi} \in \mathbb{R}^n, \text{ a.e. in
}\Omega, ~\forall \alpha \in \Lambda.
\end{equation} 
The corresponding Cordes condition for $A^\alpha$, $\bm{b}^\alpha \in
C(\bar{\Omega} \times \Lambda;\mathbb{R}^n)$ and $c^\alpha \in
C(\bar{\Omega} \times \Lambda)$ can be stated as follows.  
\begin{definition}[Cordes condition for \eqref{eq:HJB}]
\label{df:HJB-Cordes}
The coefficients satisfy 
\begin{subequations} \label{eq:HJB-Cordes}
\begin{enumerate}
\item whenever $\bm{b}^\alpha \not\equiv \bm{0}$ or $c^\alpha
\not\equiv 0$ for some $\alpha \in \Lambda$, there exist $\lambda > 0$
and $\varepsilon\in (0,1]$ such that 
\begin{equation} \label{eq:HJB-Cordes1}
\frac{|A^\alpha|^2 + |\bm{b}^\alpha|^2/(2\lambda) +
(c^\alpha/\lambda)^2}{(\tr A^\alpha + c^\alpha/\lambda)^2} \leq \frac{1}{n +
\varepsilon} \qquad \text{a.e. in
}\Omega, ~\forall \alpha \in \Lambda.
\end{equation}
\item whenever $\bm{b}^\alpha\equiv \bm{0}$ and $\bm{c}^\alpha \equiv
0$ for all $\alpha \in \Lambda$, there is an $\varepsilon \in (0, 1]$
such that 
\begin{equation} \label{eq:HJB-Cordes2}
\frac{|A^\alpha|^2}{(\tr A^\alpha)^2} \leq \frac{1}{n-1 + \varepsilon}
\qquad \text{a.e. in }\Omega.
\end{equation}
\end{enumerate}
\end{subequations}
\end{definition}
Similar to \eqref{eq:nondiv-gamma} for the linear elliptic equations
in non-divergence form, for each $\alpha \in \Lambda$, we define 
\begin{equation} \label{eq:HJB-gamma}
\gamma^\alpha := \left\{
\begin{array}{cl}
\displaystyle\frac{\tr A^\alpha + c^\alpha/\lambda}{|A^\alpha|^2 +
|\bm{b}^\alpha|^2/(2\lambda) +
(c^\alpha/\lambda)^2} & ~~\bm{b}^\alpha \not\equiv \bm{0} \text{
or } c^\alpha \not\equiv 0 ~ \text{ for some }\alpha \in \Lambda,
\\
\displaystyle\frac{\tr A^\alpha}{|A^\alpha|^2} & ~~\bm{b}^\alpha
\equiv \bm{0} \text{ and } c^\alpha \equiv 0 ~ \text{ for all }
\alpha \in \Lambda.
\end{array}
\right.
\end{equation}
Note here that the continuity of data implies $\gamma^\alpha \in
C(\bar{\Omega}\times \Lambda)$. Define the operator $F_\gamma:
H^2(\Omega) \to L^2(\Omega)$ by 
\begin{equation} \label{eq:HJB-F}
F_\gamma[v] := \sup_{\alpha \in \Lambda} \gamma^\alpha(L^\alpha v -
f^\alpha).
\end{equation}
It is readily seen that the HJB equations \eqref{eq:HJB} is in fact
equivalent to the problem $F_\gamma[u] = 0$ in $\Omega$, $u = 0$ on
$\partial \Omega$. The Cordes condition \eqref{eq:HJB-Cordes} leads to
the following lemma; See \cite[Lemma 1]{smears2014discontinuous} for a
proof.

\begin{lemma} \label{lm:HJB-Cordes-prop}
Under the Cordes condition \eqref{eq:HJB-Cordes}, for any open set $U
\subset \Omega$ and $w, v \in H^2(U)$, $z:= w-v$, the following
inequality holds a.e. in $U$:
\begin{equation} \label{eq:HJB-Cordes-prop}
|F_\gamma[w] - F_\gamma[v] - L_\lambda z| \leq \sqrt{1-\varepsilon}
\sqrt{|D^2 z|^2 + 2\lambda|\nabla z|^2 + \lambda^2|z|^2}.
\end{equation}
\end{lemma}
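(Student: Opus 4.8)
The plan is to mimic the proof of Lemma~\ref{lm:nondiv-Cordes-prop}, but carried out in the product space $\mathbb{R}^{n\times n}\times\mathbb{R}^n\times\mathbb{R}$ in order to absorb the first- and zeroth-order terms; here $L_\lambda z=\Delta z-\lambda z$. The first step is to reduce to a pointwise bound that is uniform in $\alpha$. Since $F_\gamma[w]-F_\gamma[v]$ is a difference of two suprema over $\alpha\in\Lambda$ in which the data $f^\alpha$ cancel, the elementary two-sided estimate $\inf_\alpha g_\alpha\le\sup_\alpha p_\alpha-\sup_\alpha q_\alpha\le\sup_\alpha g_\alpha$ with $g_\alpha:=p_\alpha-q_\alpha=\gamma^\alpha L^\alpha z$ gives, a.e.\ in $U$,
\[
|F_\gamma[w]-F_\gamma[v]-L_\lambda z|\le\sup_{\alpha\in\Lambda}|\gamma^\alpha L^\alpha z-L_\lambda z|.
\]
The joint continuity of the coefficients in $(x,\alpha)$ together with compactness of $\Lambda$ makes this supremum a measurable function of $x$, so there is no essential-supremum subtlety; it therefore suffices to bound $|\gamma^\alpha L^\alpha z-L_\lambda z|$ for each fixed $\alpha$, a.e.\ in $U$, by $\sqrt{1-\varepsilon}$ times the stated square root.

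Next I would set up a Cauchy--Schwarz inequality. For fixed $\alpha$ and a.e.\ $x$, using $\Delta z=I:D^2z$ one can write $\gamma^\alpha L^\alpha z-L_\lambda z$ as the inner product of
\[
P^\alpha:=\left(\gamma^\alpha A^\alpha-I,\ \frac{\gamma^\alpha\bm{b}^\alpha}{\sqrt{2\lambda}},\ 1-\frac{\gamma^\alpha c^\alpha}{\lambda}\right),\qquad
Q:=\left(D^2z,\ \sqrt{2\lambda}\,\nabla z,\ \lambda z\right)
\]
in the product space. Since $|Q|^2=|D^2z|^2+2\lambda|\nabla z|^2+\lambda^2|z|^2$ is exactly the quantity under the square root, Cauchy--Schwarz reduces the claim to proving $|P^\alpha|^2\le1-\varepsilon$ a.e., for every $\alpha$.

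This last pointwise algebraic estimate is the heart of the matter. Expanding $|P^\alpha|^2$ and using $|I|^2=n$ gives
\[
|P^\alpha|^2=(\gamma^\alpha)^2\left(|A^\alpha|^2+\frac{|\bm{b}^\alpha|^2}{2\lambda}+\frac{(c^\alpha)^2}{\lambda^2}\right)-2\gamma^\alpha\left(\tr A^\alpha+\frac{c^\alpha}{\lambda}\right)+n+1.
\]
The definition \eqref{eq:HJB-gamma} of $\gamma^\alpha$ is precisely the scalar minimizing this quadratic in $\gamma^\alpha$; substituting it collapses the first two terms and leaves
\[
|P^\alpha|^2=n+1-\frac{(\tr A^\alpha+c^\alpha/\lambda)^2}{|A^\alpha|^2+|\bm{b}^\alpha|^2/(2\lambda)+(c^\alpha/\lambda)^2},
\]
and the Cordes condition \eqref{eq:HJB-Cordes1} is exactly the statement that this fraction is at least $n+\varepsilon$, hence $|P^\alpha|^2\le1-\varepsilon$. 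Together with the two previous steps this settles the nondegenerate case. In the degenerate case $\bm{b}^\alpha\equiv\bm{0}$, $c^\alpha\equiv0$, the same computation with $L_\lambda$ replaced by $\Delta$ and with \eqref{eq:HJB-Cordes2} in place of \eqref{eq:HJB-Cordes1} reproduces Lemma~\ref{lm:nondiv-Cordes-prop}, yielding $|\gamma^\alpha L^\alpha z-\Delta z|\le\sqrt{1-\varepsilon}\,|D^2z|$; the extra nonnegative terms $2\lambda|\nabla z|^2+\lambda^2|z|^2$ on the right-hand side only weaken the bound, so the stated inequality still holds.

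I expect the main obstacle to be the bookkeeping in this algebraic step — in particular, recognizing the minimize-over-scaling structure so that $|P^\alpha|^2$ reduces cleanly to $n+1$ minus the Cordes quotient, and keeping the $\lambda$-weights consistent between $P^\alpha$ and $Q$. The supremum-difference reduction, the Cauchy--Schwarz estimate, and the measurability of the supremum are all routine.
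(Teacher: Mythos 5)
Your argument is correct and is essentially the standard proof of this lemma: the paper itself does not reprove it but defers to \cite[Lemma 1]{smears2014discontinuous}, whose argument is exactly your three steps (the sup-difference reduction, Cauchy--Schwarz in the product space with the $\sqrt{2\lambda}$ and $\lambda$ weights, and the algebraic identity showing $|P^\alpha|^2=n+1$ minus the Cordes quotient, with $\gamma^\alpha$ as the minimizing scaling). The degenerate case is also handled correctly, with the understanding that $\lambda=0$ there so that $L_\lambda=\Delta$.
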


For $\lambda$ as in \eqref{eq:HJB-Cordes1}, we define a linear operator
$L_\lambda$ by 
\begin{equation} \label{eq:L-lambda}
L_\lambda v := \Delta v - \lambda v \qquad v \in V.
\end{equation}
Let the operator $M: V \to V^*$ be
\begin{equation} \label{eq:HJB-M}
\langle M[w], v \rangle := \int_\Omega F_\gamma[w] L_\lambda v
\,\mathrm{d}x \qquad \forall w, v \in V.
\end{equation}
where $\langle \cdot, \cdot \rangle$ is the dual pairing between $V^*$
and $V$.  Under the condition of Miranda-Talenti estimate in Lemma \ref{lm:MT}, it is
straightforward to show that 
\begin{equation} \label{eq:L-lambda-bound} 
\|L_\lambda v\|_{L^2(\Omega)}^2 \geq \int_{\Omega} |D^2v|^2 +
2\lambda|\nabla v|^2 + \lambda^2|v|^2\, \mathrm{d}x \qquad \forall v \in
V.
\end{equation} 
By using the Cordes condition \eqref{eq:HJB-Cordes}, one can show the
strong monotonicity of $M$. Together with the Lipschitz continuity of
$M$ by the compactness of $\Lambda$ and the Browder-Minty Theorem
\cite[Theorem 10.49]{renardy2006introduction}, one can show the
existence and uniqueness of the following problem: Find $u \in V$ such
that  
\begin{equation} \label{eq:HJB-var}
\langle M[u], v \rangle = 0 \qquad \forall v \in V.
\end{equation}
We refer to \cite[Theorem 3]{smears2014discontinuous} for a detailed
proof. 

\section{Finite element spaces and discrete Miranda-Talenti-type estimate}
\label{sec:FEM}

Let $\mathcal{T}_h$ be a conforming and shape-regular simplicial
triangulation of $\Omega$ and $\mathcal{F}_h$ be the set of all
faces of $\mathcal{T}_h$. Let $\mathcal{F}_h^i := \mathcal{F}_h
\setminus \partial \Omega$ and $\mathcal{F}_h^\partial := \mathcal{F}_h
\cap \partial\Omega$.  Here, $h := \max_{K\in \mathcal{T}_h} h_K$,
and $h_K$ is the diameter of $K$ (cf. \cite{ciarlet1978finite,
brenner2007mathematical}). Since each element has piecewise flat
boundary, the faces may also be chosen to be flat.  

We define the jump of a vector function $\bm{w}$ on an interior
 face $F = \partial K^+ \cap \partial K^-$ as follows:
$$ 
\llbracket \bm{w} \rrbracket|_F := \bm{w}^+ \cdot \bm{n}^+|_F + 
\bm{w}^- \cdot \bm{n}^-|_F,
$$ 
where $\bm{w}^\pm = \bm{w}|_{K^\pm}$ and $\bm{n}^\pm$ is the outward
unit normal vector of $K^\pm$, respectively. We also denote $\omega_F:= K^+ \cup K^-$
for any $F\in \mathcal{F}_h^i$. For an element $K \in \mathcal{T}_h$,
$(\cdot, \cdot)_K$ denotes the standard inner product on $L^2(K)$. The
standard inner products $\langle \cdot, \cdot \rangle_{\partial K}$
and $\langle \cdot, \cdot \rangle_F$, are defined in a similar
way.

For $F \in \mathcal{F}_h$, following \cite{smears2013discontinuous,
smears2014discontinuous}, we define the tangential gradient
$\nabla_T: H^s(F) \to \bm{H}_T^{s-1}(F)^n$ and the tangential
divergence $\mathrm{div}_T: \bm{H}_T^s(F) \to H^{s-1}(F)$, where $s
\geq 1$. Here, $\bm{H}_T^s(F):= \{\bm{v} \in H^s(F)^n:~\bm{v}\cdot
\bm{n}_F = 0~\text{on }F\}$. Let $\{\bm{t}_i\}_{i=1}^{n-1}$ be an
orthogonal coordinate system on $F$. Then, for $w \in H^s(F)$ and
$\bm{v} = \sum_{i=1}^{n-1} v_i \bm{t}_i$ with $v_i \in H^s(F)$ for
$i=1,\cdots, n-1$, define 
\begin{equation}
\nabla_T w := \sum_{i=1}^{n-1} \bm{t}_i \frac{\partial w}{\partial
  \bm{t}_i}, \qquad \mathrm{div}_T \bm{v} := \sum_{i=1}^{n-1}
  \frac{\partial v_i}{\partial \bm{t}_i}.
\end{equation}
We also define $\Delta_T w := \mathrm{div}_T\nabla_T w$ for $w \in
H^s(F)$, where $s \geq 2$. 

\subsection{The families of 2D Hermite finite elements and 3D Argyris
finite elements} 
In this subsection, we shall describe the finite elements that will be
used to solve the linear elliptic equations in non-divergence form
\eqref{eq:nondiv} and the HJB equations \eqref{eq:HJB}. More
precisely, we adopt the Hermite elements in 2D and the Argyris
elements in 3D \cite{neilan2015discrete, christiansen2018nodal}, both
of which have the $C^0$-continuity on the face (namely,
$(n-1)$-dimensional subsimplex) and $C^1$-continuity on the
$(n-2)$-dimensional subsimplex. 

\paragraph{The family of 2D Hermite finite elements} Following the
description of \cite{ciarlet1978finite, brenner2007mathematical}, the
geometric shape of Hermite elements is triangle $K$. The shape
function space is given as $\mathcal{P}_k(K)~(k\geq 3)$, where
$\mathcal{P}_k(K)$ denotes the set of polynomials with total degree
not exceeding $k$ on $K$. In $K$ the degrees of freedom are defined as
follows (cf. Figure \ref{fig:Hermite-2D}):
\begin{itemize}
\item Function value $v(\bm{a})$ and first order derivatives
$\partial_i v(\bm{a})$, $i=1,2$ at each vertex;
\item Moments $\int_e v q\,\mathrm{d}s, \forall q \in
\mathcal{P}_{k-4}(e)$ on each edge $e$;
\item Moments $\int_K v q\,\mathrm{d}x, \forall q \in
\mathcal{P}_{k-3}(K)$ on element $K$.
\end{itemize}
It is simple to check that the degrees of freedom given above form a
unisolvent set of $\mathcal{P}_k(K)$ for $k \geq 3$
\cite{ciarlet1978finite}.

\begin{figure}[!htbp]
\centering 
\captionsetup{justification=centering}
\subfloat[2D Hermite element, $k=3$]{
  \includegraphics[width=0.24\textwidth]{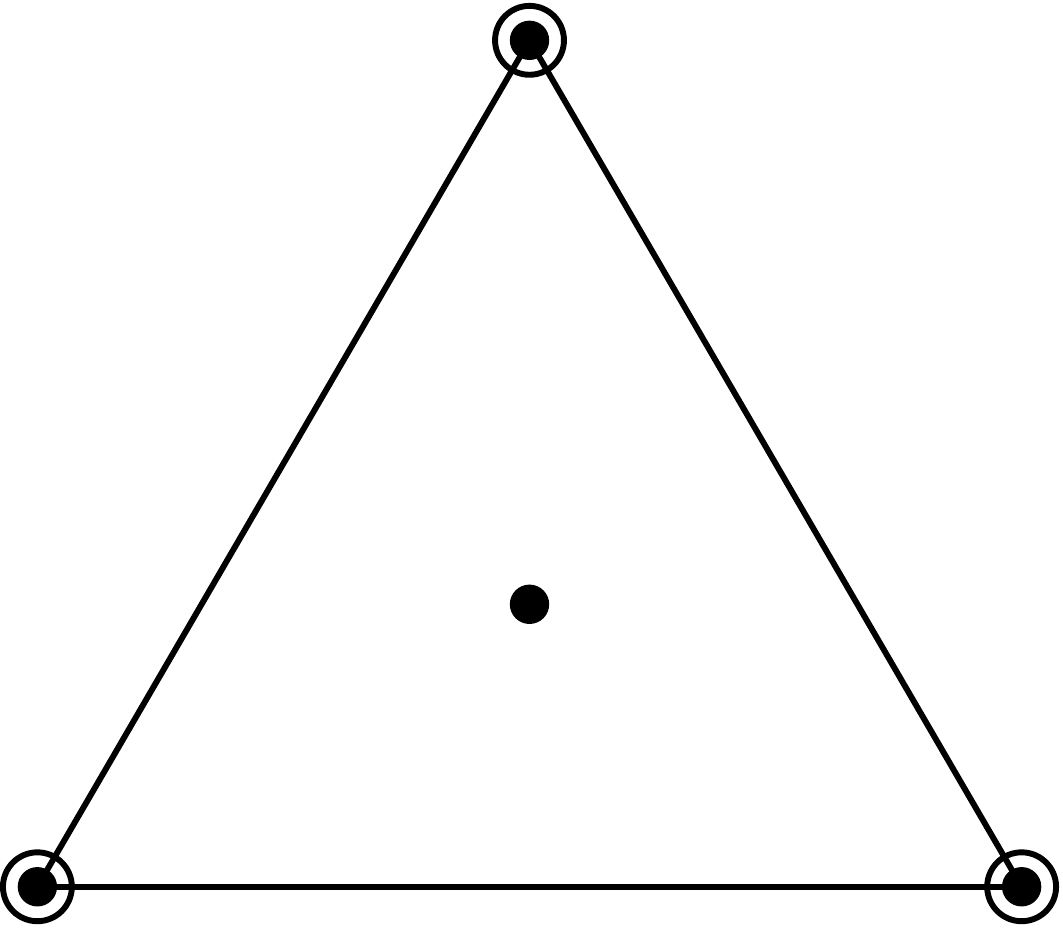}
  \label{fig:Hermite-2Dk3}
}\qquad %
\subfloat[2D Hermite element, $k=4$]{
  \includegraphics[width=0.24\textwidth]{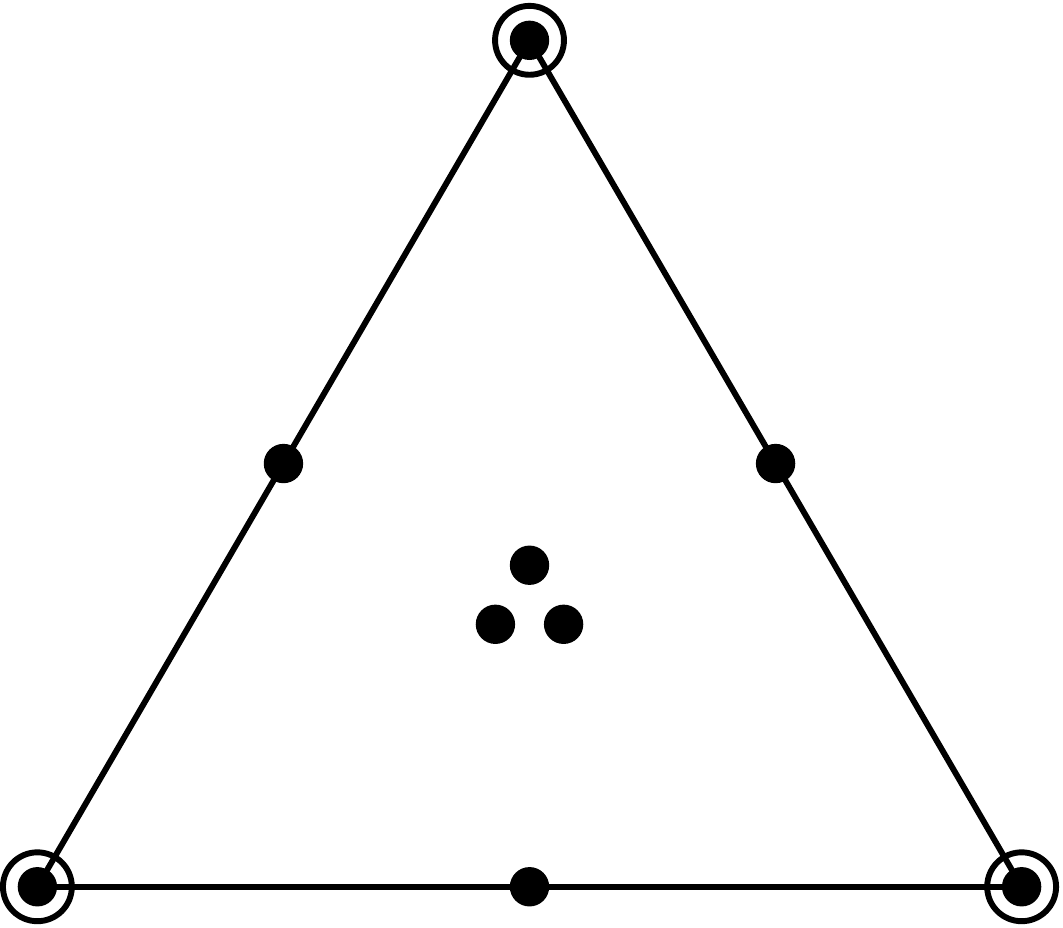}
  \label{fig:Hermite-2Dk4}
} 
\caption{Degrees of freedom of 2D $\mathcal{P}_k$ Hermite elements, in
  the case of $k=3$ and $k=4$}
\label{fig:Hermite-2D}
\end{figure}

\paragraph{The family of 3D Argyris finite elements} In 3D case, the
finite elements are required to be $C^0$ on face and $C^1$ on edge.
The typical elements that meet the requirement in 3D are the Argyris
elements \cite{christiansen2018nodal}, which coincide with each
component of the velocity finite elements in the 3D smooth de Rham
complex \cite{neilan2015discrete}. Given a  tetrahedron $K$, the shape
function space is given as $\mathcal{P}_k(K)$, for $k \geq 5$. In $K$
the degrees of freedom are defined as follows (cf. Figure
\ref{fig:Argyris-3D}):
\begin{itemize}
\item One function value and (nine) derivatives up to second
order at each vertex; 
\item Moments $\int_e v q \,\mathrm{d}s, \forall q \in
\mathcal{P}_{k-6}(e)$ on each edge $e$;
\item Moments $\int_e \frac{\partial v}{\partial \bm{n}_{e,i}} q
\,\mathrm{d}s, \forall q \in \mathcal{P}_{k-5}(e)$, $i=1,2$ on each edge $e$;
\item Moments $\int_F v q \,\mathrm{d}s, \forall q \in
\mathcal{P}_{k-6}(F)$ on each face $F$;
\item Moments $\int_T v q \,\mathrm{d}x, \forall q \in
\mathcal{P}_{k-4}(K)$ on element $K$.
\end{itemize}
Here, $\bm{n}_{e,i}~(i=1,2)$ are two unit orthogonal normal vectors that are orthogonal to the edge $e$.

\begin{figure}[!htbp]
\centering 
\captionsetup{justification=centering}
\subfloat[3D Argyris elements, $k=5$]{
  \includegraphics[width=0.28\textwidth]{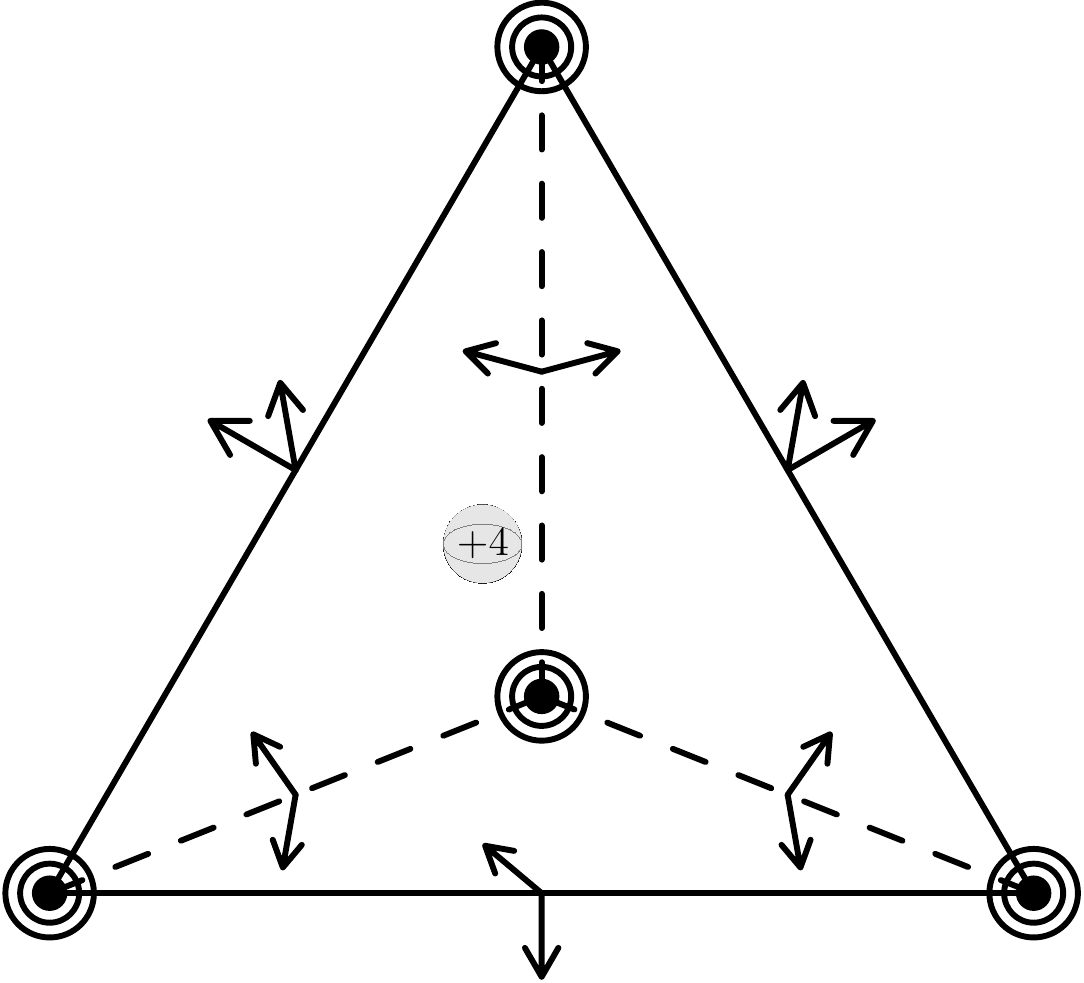}
  \label{fig:Argyris-3Dk5}
}\qquad %
\subfloat[3D Argyris elements, $k=6$]{
  \includegraphics[width=0.28\textwidth]{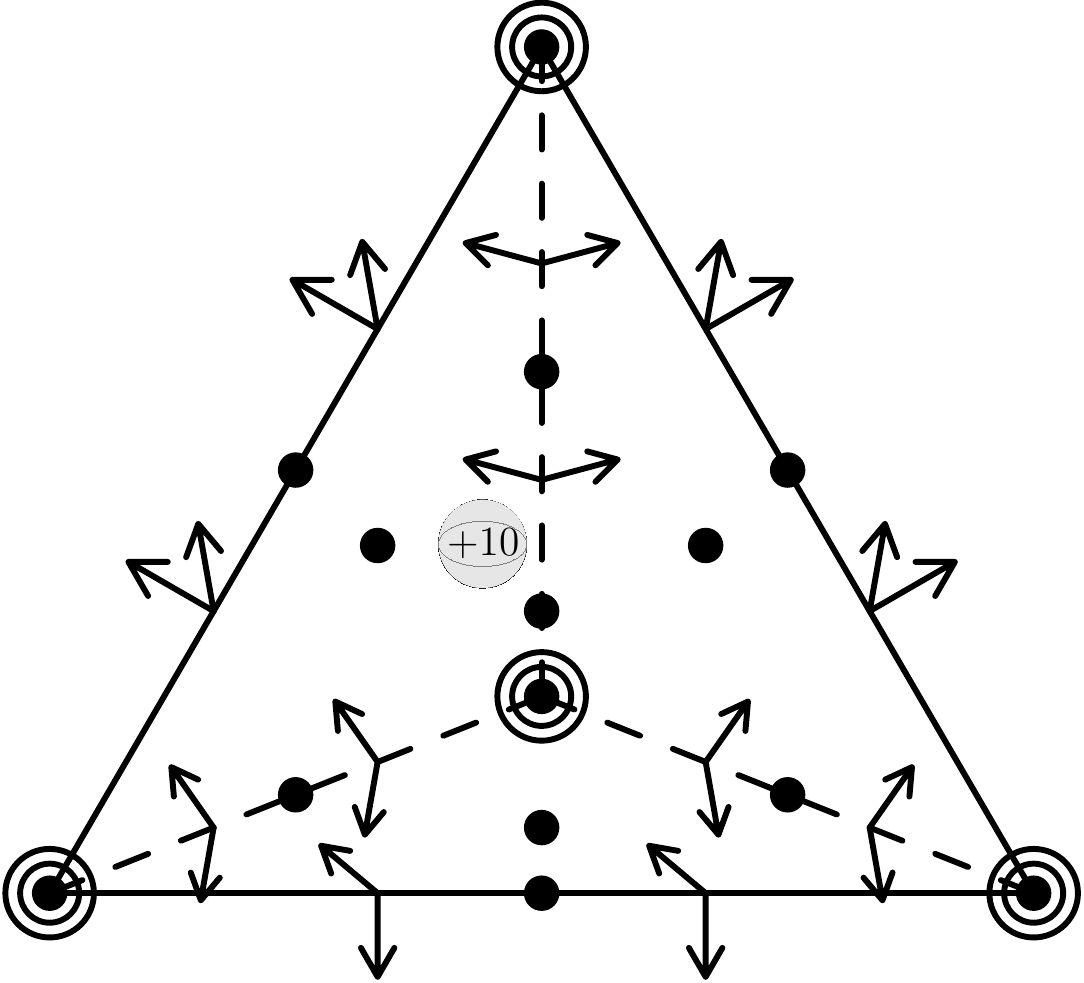}
  \label{fig:Argyris-3Dk6}
} 
\caption{Degrees of freedom of 3D $\mathcal{P}_k$ Argyris elements, in
  the case of $k=5$ and $k=6$}
\label{fig:Argyris-3D}
\vspace{-4mm}
\end{figure}

We sketch the main argument of the unisolvent property. We first
note that the number of degrees of freedom above is 
$$ 
4\times 10 + 6\times(k-5) + 6\times 2(k-4) + 4\times\frac{(k-4)(k-3)}{2}
+ \frac{(k-1)(k-2)(k-3)}{6},
$$ 
which is exactly the dimension of $\mathcal{P}_k(K)$. Therefore,
it suffices to show that $v\in \mathcal{P}_k(K)$ vanishes if it
vanishes at all the degrees of freedom. It is readily seen that the
trace of $v|_F \in \mathcal{P}_k(F)$, which has to be zero since the
degrees of freedom on the face are that of 2D Argyris elements \cite{ciarlet1978finite} (this
also shows the $C^0$-continuity on face). Therefore $v = b_K p$ for
some $p \in \mathcal{P}_{k-4}(K)$, where $b_K$ is the quartic volume
bubble function. By the set of degrees of freedom on element $K$, we
deduce $v \equiv 0$. The $C^1$-continuity on the edge follows from the
$C^1$-continuity of the Argyris elements in 2D. 

\subsection{Finite element spaces}
For every triangulation $\mathcal{T}_h$ of the polytope $\Omega$, we
are now ready to define the finite element spaces $V_h$ as 
\begin{subequations} \label{eq:FEM-space}
\begin{enumerate}
\item For $n = 2$, with $k \geq 3$, 
\begin{equation} \label{eq:FEM-Hermite}
V_h := \{v \in H_0^1(\Omega): v|_K \in \mathcal{P}_k(K), \forall K \in
\mathcal{T}_h, ~v \text{ is }C^1 \text{ at all vertices}\}.
\end{equation}
\item For $n=3$, with $k \geq 5$, 
\begin{equation} \label{eq:FEM-Argyris}
\begin{aligned}
V_h := \{v \in H_0^1(\Omega): v|_K \in \mathcal{P}_k(K), \forall K \in
\mathcal{T}_h, ~& v \text{ is
}C^1 \text{ on all edges}, \\
& v \text{ is }C^2 \text{ at all vertices}\}.
\end{aligned}
\end{equation}
\end{enumerate}
\end{subequations}
A unisolvent set of degrees of freedom of $V_h$ is given locally by
that of 2D Hermite elements or 3D Argyris elements.  Since the finite
elements may have extra continuity on subsimplex, we briefly explain
the implementation of the boundary conditions.  

\paragraph{Boundary condition of 2D Hermite FEM space} Since the first
order derivatives are imposed at vertices, for any $v_h \in V_h$,
the tangential derivatives along the boundary should also be zero due
to the consistency. In this case, we follow the terminology in
\cite{falk2013stokes, christiansen2018nodal} to introduce the
definition of {\it corner vertices}.   

\begin{definition} \label{df:corner-vertex-2D}
A boundary vertex is called a \it{corner vertex} if the two adjacent
boundary edges sharing this vertex do not lie on a straight line.
\end{definition}
      
At each corner boundary vertex, since the two tangential directions
along the boundary form a basis of $\mathbb{R}^2$, we should impose
both the first order derivatives to be zero. At a non-corner boundary
vertex, the two tangential derivatives along its two adjacent edges
coincide up to a sign. In this case, we only specify the degree of freedom value of
this tangential derivative. 

\paragraph{Boundary condition of 3D Argyris FEM space} The enhanced
continuities are imposed on the edge and vertex for the 3D Argris
finite element.  Therefore, we follow the terminology in
\cite{christiansen2018nodal} to introduce the {\it corner vertices and
corner edges}.

\begin{definition}\label{df:corner-3D}
A boundary vertex is called corner vertex in 3D if the adjacent
boundary edges sharing this vertex are not coplanar. A boundary edge
is called a corner edge in 3D if the two adjacent faces on the
boundary sharing this edge are not coplanar.  
\end{definition}

\begin{figure}[!htbp]
\centering 
\captionsetup{justification=centering}
\subfloat[corner vertex $a'$ and corner edge $e'$]{
  \includegraphics[width=0.25\textwidth]{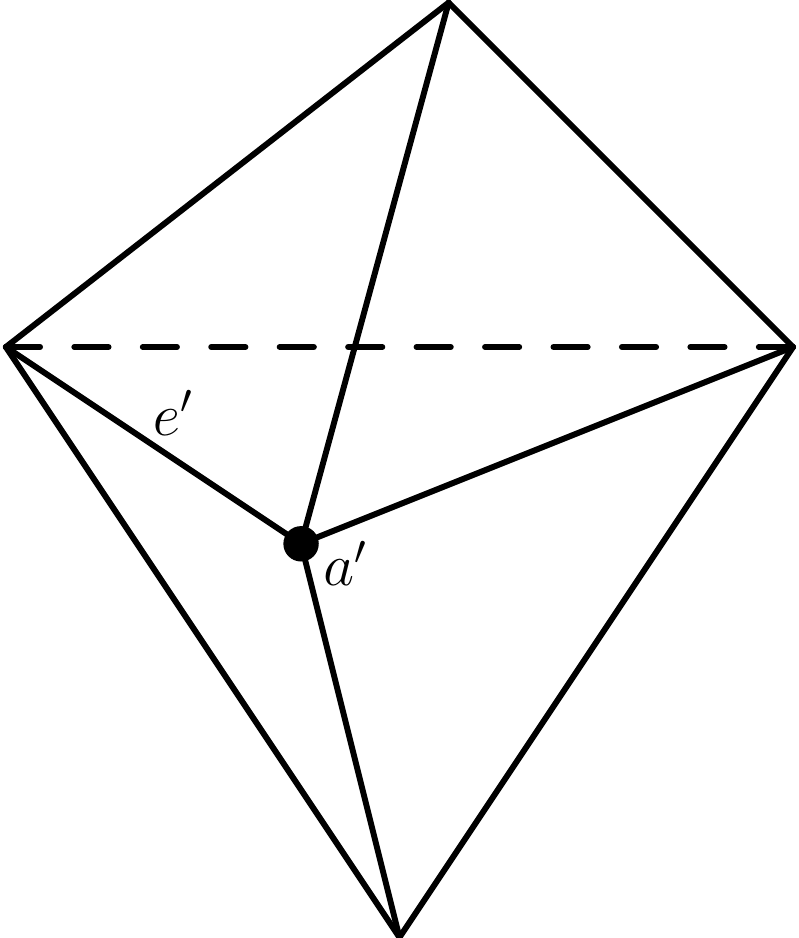}
  \label{fig:3D-corner}
}\qquad %
\subfloat[non-corner vertex $a$ and non-corner edge $e$]{
  \includegraphics[width=0.25\textwidth]{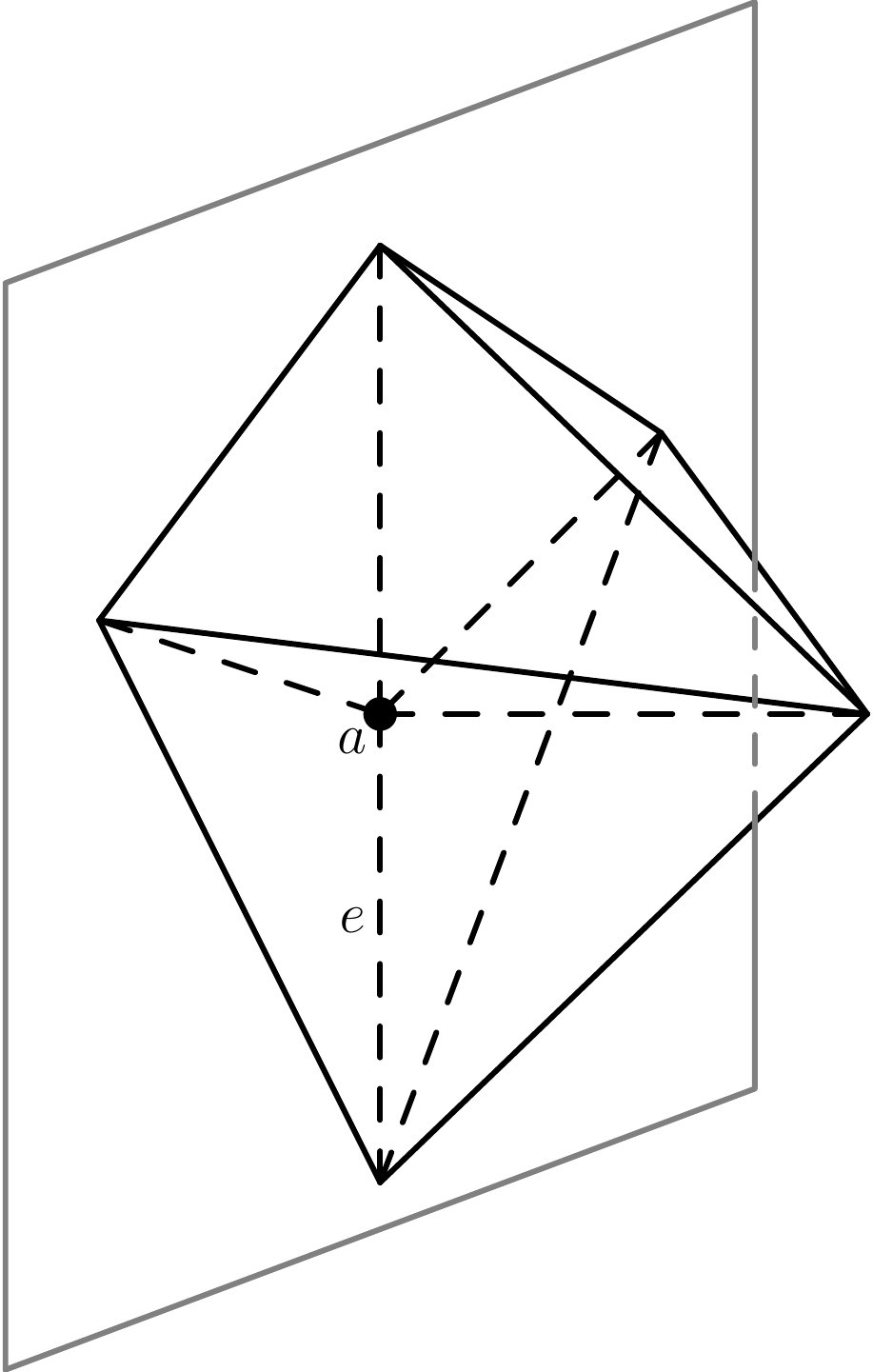}
  \label{fig:3D-non-corner}
} 
\caption{Corner/non-corner vertex and edge in 3D.}
\label{fig:3D-bc}
\vspace{-3mm}
\end{figure}

For a corner boundary vertex in 3D, there are three linearly
independent boundary edges connected to it. Hence, all the degrees of
freedom at the corner boundary vertex should be set to zero.
Similarly, on a corner edge, derivatives of a function along two
normal directions can be determined by the function value on the
boundary. Hence, all the degrees of freedom at the corner boundary
vertex should be set to zero.  

On the other hand, there are only two independent directions, namely
$\bm{t}_1$ and $\bm{t}_2$ along the boundary at a non-corner boundary
vertex or on a non-corner edge (cf. Figure \ref{fig:3D-non-corner}).
Therefore at a non-corner vertex, function value, two tangential
first order derivatives (i.e. $\partial_{\bm{t}_1}$, $\partial_{\bm{t}_2}$)
and three tangential second order derivatives (i.e.
$\partial^2_{\bm{t}_1\bm{t}_1}$, $\partial^2_{\bm{t}_2\bm{t}_2}$,
$\partial^2_{\bm{t}_1\bm{t}_2}$) are set to be zero. The degrees of
freedom corresponding to the normal first order derivative (i.e.
$\partial_{\bm{n}}$) and three second order derivatives (i.e.
$\partial^2_{\bm{n}\bm{n}}$, $\partial^2_{\bm{t}_1\bm{n}}$,
$\partial^2_{\bm{t}_2\bm{n}}$) should be treated as knowns. The treatment 
of degree of freedoms on 
non-corner edge follows a similar way.

\subsection{Discrete Miranda-Talenti-type estimate}

The following lemma is crucial in the design and analysis of $C^0$ (non-Lagrange)
finite element approximations of the linear elliptic equations in non-divergence 
form \eqref{eq:nondiv} and the HJB equations \eqref{eq:HJB}. 

\begin{lemma}[Discrete Miranda-Talenti-type estimate] \label{lm:Hermite-MT}
Let $\Omega \subset \mathbb{R}^n~(n=2,3)$  be a bounded Lipschitz polytopal
domain and $\mathcal{T}_h$ be  a conforming triangulation. For $v \in
V_h$, it holds that  
\begin{equation} \label{eq:Hermite-MT}
\sum_{K\in \mathcal{T}_h} \|\Delta v_h\|_{L^2(K)}^2 = 
\sum_{K\in \mathcal{T}_h} \|D^2 v_h\|_{L^2(K)}^2 + 2\sum_{F \in \mathcal{F}_h^i}
\langle \llbracket \nabla v_h \rrbracket,  \Delta_T v_h \rangle_{F}.
\end{equation} 
\end{lemma}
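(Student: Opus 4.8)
The plan is to prove the identity \eqref{eq:Hermite-MT} element by element via integration by parts, and then to reassemble the boundary contributions, observing that the enhanced continuity of $V_h$ forces almost all of them to cancel. First, I would recall the classical pointwise identity, valid on each simplex $K$ for a smooth function $w$, that relates $(\Delta w)^2$ to $|D^2 w|^2$ up to a divergence term: integrating over $K$ and using the divergence theorem twice yields
\begin{equation*}
\|\Delta w\|_{L^2(K)}^2 = \|D^2 w\|_{L^2(K)}^2 + \int_{\partial K} \Bigl( (\nabla w \cdot \bm{n})\,\Delta_T w - \nabla_T(\nabla w\cdot\bm{n})\cdot \nabla_T w \Bigr)\,\mathrm{d}s,
\end{equation*}
(the precise form of the boundary term is the standard Miranda--Talenti boundary integrand, which on a flat face $F$ with constant normal involves only tangential derivatives of $w$ and the normal derivative $\partial_{\bm n} w$). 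This is a purely local computation on polynomials, so no regularity issue arises. Here it is crucial that the faces of $K$ are flat, so that the second fundamental form of $\partial K$ vanishes and the only surviving terms are the two written above.

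Second, I would sum over $K \in \mathcal{T}_h$ and organize the resulting sum of boundary integrals by faces $F \in \mathcal{F}_h = \mathcal{F}_h^i \cup \mathcal{F}_h^\partial$. On each interior face $F = \partial K^+ \cap \partial K^-$, the two contributions from $K^\pm$ combine. The key point is the continuity structure: since $v_h \in H_0^1(\Omega)$ and $v_h|_K$ is a polynomial, the trace $v_h|_F$ and hence its tangential gradient $\nabla_T v_h|_F$ and tangential Laplacian $\Delta_T v_h|_F$ are single-valued on $F$; only the normal derivative $\partial_{\bm n} v_h$ can jump, and $\llbracket \nabla v_h \rrbracket|_F = \partial_{\bm n^+} v_h^+ + \partial_{\bm n^-} v_h^-$ records exactly that jump. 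Matching the two integrands and simplifying, the term $\nabla_T(\nabla v_h\cdot\bm n)\cdot\nabla_T v_h$ does \emph{not} automatically cancel on a general $C^0$ element, and this is where the $C^1$-continuity on $(n-2)$-dimensional subsimplices enters.

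The main obstacle — and the heart of the lemma — is to show that the ``bad'' boundary term $\int_F \nabla_T(\partial_{\bm n}v_h)\cdot\nabla_T v_h\,\mathrm{d}s$ assembles to zero globally. I would handle it by a further tangential integration by parts on each face $F$:
\begin{equation*}
\int_F \nabla_T(\partial_{\bm n} v_h)\cdot \nabla_T v_h\,\mathrm{d}s = -\int_F \partial_{\bm n} v_h\,\Delta_T v_h\,\mathrm{d}s + \int_{\partial F} \partial_{\bm n} v_h\,(\nabla_T v_h\cdot \bm{\nu}_{F})\,\mathrm{d}s,
\end{equation*}
where $\bm\nu_F$ is the outward conormal of $\partial F$ within $F$ and $\partial F$ consists of $(n-2)$-dimensional subsimplices (vertices when $n=2$, edges when $n=3$). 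Substituting this back, the $-\partial_{\bm n}v_h\,\Delta_T v_h$ piece combines with the other interior-face term to produce precisely $2\langle \llbracket \nabla v_h\rrbracket,\Delta_T v_h\rangle_F$ after summing the $K^+$ and $K^-$ sides, while the codimension-two terms $\int_{\partial F}\partial_{\bm n}v_h(\nabla_T v_h\cdot\bm\nu_F)$ must now cancel upon summation over all faces. This cancellation is where the finite element design pays off: around each interior $(n-2)$-subsimplex the contributions from all incident faces telescope because $v_h$ is $C^1$ there (so $\nabla v_h$, hence every directional derivative appearing, is single-valued along that subsimplex), and the tangential gradient $\nabla_T v_h$ restricted to the codimension-two subsimplex is continuous across faces; the orientations of the conormals $\bm\nu_F$ around the subsimplex are arranged so the sum vanishes. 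For boundary faces $F\in\mathcal{F}_h^\partial$, the homogeneous boundary condition $v_h\in H_0^1(\Omega)$ kills $\nabla_T v_h|_F$ and $\Delta_T v_h|_F$ outright, so those faces contribute nothing. I would organize the codimension-two bookkeeping by grouping terms around each subsimplex $e$ (an interior vertex in 2D, an interior edge in 3D, plus the boundary cases), verify the $C^1$ matching makes each group collapse, and conclude \eqref{eq:Hermite-MT}. The only genuinely delicate point is this last orientation/telescoping argument; everything else is routine integration by parts on polynomials.
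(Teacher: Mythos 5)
Your proposal is correct and follows essentially the same route as the paper: the elementwise Miranda--Talenti identity, a further tangential integration by parts on each flat face to trade $\nabla_T(\partial_{\bm n}v_h)\cdot\nabla_T v_h$ for $-\partial_{\bm n}v_h\,\Delta_T v_h$ plus codimension-two terms, and then cancellation of those terms via the $C^1$-continuity on the $(n-2)$-subsimplices and the boundary condition. The only cosmetic difference is that the paper cancels the codimension-two contributions by pairing the $K^+$ and $K^-$ sides of each interior face (using $\partial_{\bm n^+}v_h^+=-\partial_{\bm n^-}v_h^-$ and the single-valuedness of $\partial_{\bm n_{\bm t_F}}v_h$ on $\partial F$) rather than by your subsimplex-by-subsimplex telescoping, which requires the same ingredients.
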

\begin{proof}
For any simplicial element $K \in \mathcal{T}_h$, the outward unit
normal vector of $\partial K$ is piecewise constant. Using integration by
parts, we obtain (see e.g. \cite[Equ. (3.7)]{smears2013discontinuous}) 
\begin{equation} \label{eq:Hermite-MT1}
\begin{aligned}
\|\Delta v_h\|_{L^2(K)}^2 &=  
\|D^2 v_h\|_{L^2(K)}^2 + \langle \Delta v_h, \frac{\partial
  v_h}{\partial
  \bm{n}} \rangle_{\partial K} - \langle \nabla \frac{\partial
    v_h}{\partial \bm{n}}, \nabla v_h \rangle_{\partial K} \\
 & = \|D^2 v_h\|_{L^2(K)}^2 + \langle \Delta_T v_h, \frac{\partial
  v_h}{\partial
  \bm{n}} \rangle_{\partial K} - \langle \nabla_T \frac{\partial
    v_h}{\partial \bm{n}}, \nabla_T v_h \rangle_{\partial K}.
\end{aligned}
\end{equation}
Here, the common term $\langle \frac{\partial^2 v_h}{\partial \bm{n}^2}, 
\frac{\partial v_h}{\partial \bm{n}} \rangle_{\partial K}$ is cancelled in the last step. We use $\bm{n}_{\bm{t}_F}$ to denote the 
outward unit normal vector of $\partial F$ coplanar to $F$. Using integration by parts on $F \subset \partial K$, we have 
$$ 
\begin{aligned}
\langle \nabla_T \frac{\partial v_h}{\partial \bm{n}}, \nabla_T v_h \rangle_{\partial K} &= \sum_{F\subset \partial K} 
\int_F \nabla_T \frac{\partial v_h}{\partial \bm{n}} \cdot \nabla_T v_h \,\mathrm{d}s \\
&= \sum_{F\subset \partial K} \left(
-\int_F \frac{\partial v_h}{\partial \bm{n}} \Delta_T v_h \,\mathrm{d}s 
+ \int_{\partial F} \frac{\partial v_h}{\partial \bm{n}} \frac{\partial v_h}{\partial \bm{n}_{\bm{t}_F}} \,\mathrm{d}s
\right).
\end{aligned}
$$ 
Hence, \eqref{eq:Hermite-MT1} can be reformulated as 
$$
\|\Delta v_h\|_{L^2(K)}^2 =  
\|D^2 v_h\|_{L^2(K)}^2 + 2 \langle \Delta_T v_h, \frac{\partial
  v_h}{\partial \bm{n}} \rangle_{\partial K} - \sum_{F \subset
    \partial K}
    \langle \frac{\partial v_h}{\partial \bm{n}},
    \frac{\partial v_h}{\partial \bm{n}_{\bm{t}_F}} \rangle_{\partial F}.
$$
Thanks to the $C^0$-continuity, it is readily seen that $\Delta_T v_h$ is continuous across the face
and vanishes on the boundary. Summing over all elements yields
\begin{equation} \label{eq:Hermite-MT2}
\begin{aligned}
\sum_{K\in \mathcal{T}_h}\|\Delta v_h\|_{L^2(K)}^2 &= \sum_{K \in \mathcal{T}_h}\|D^2 v_h\|_{L^2(K)}^2 +
2\sum_{F \in \mathcal{F}_h^i} \langle \llbracket \nabla v_h \rrbracket, \Delta_T v_h \rangle_F \\
& \qquad\qquad - \sum_{K \in \mathcal{T}_h} \sum_{F\subset \partial K} \langle \frac{\partial v_h}{\partial \bm{n}},
    \frac{\partial v_h}{\partial \bm{n}_{\bm{t}_F}} \rangle_{\partial F}.
\end{aligned}
\end{equation}
For any $F\in \mathcal{F}_h^\partial$, the boundary condition implies
that $\frac{\partial v_h}{\partial \bm{n}_{\bm{t}_F}} = 0$.  For any
interior face $F = \partial K^+ \cap \partial K^-$, the
$C^1$-continuity on the $(n-2)$-dimensional subsimplex implies that 
$$ 
\left.\frac{\partial v_h^+}{\partial \bm{n}^+}\right|_{\partial F} = -\left.\frac{\partial v_h^-}{\partial \bm{n}^-}\right|_{\partial F}, \quad 
\left.\frac{\partial v_h^+}{\partial \bm{n}_{\bm{t}_F}}\right|_{\partial F} = \left.\frac{\partial v_h^-}{\partial \bm{n}_{\bm{t}_F}}\right|_{\partial F}.
$$ 
Then, we deduce that the last term in \eqref{eq:Hermite-MT2} vanishes,
which gives the desired result \eqref{eq:Hermite-MT}.
\end{proof}

\section{Applications to the linear elliptic equations in
non-divergence form}
\label{sec:nondiv}
In this section, we apply the $C^0$ (non-Lagrange) finite element method to solve the
linear elliptic equations in non-divergence form \eqref{eq:nondiv}. 

\subsection{Numerical scheme}
Define the broken bilinear form for $w \in V+V_h$ and $v_h \in V_h$:
\begin{equation} \label{eq:nondiv-bilinear}
B_{0,h}(w, v_h) := \sum_{K\in \mathcal{T}_h}(\gamma Lw, \Delta v_h)_K -
(2-\sqrt{1-\varepsilon}) \sum_{F\in \mathcal{F}_h^i} \langle
\llbracket \nabla w\rrbracket, \Delta_T v_h\rangle_F.
\end{equation} 
We propose the following finite element scheme to approximate the
solution to linear elliptic equations in non-divergence form
\eqref{eq:nondiv}: Find $u_h \in V_h$ such that 
\begin{equation} \label{eq:nondiv-h}
B_{0,h}(u_h, v_h) = \sum_{K \in \mathcal{T}_h}(\gamma f, \Delta v_h)_K
  \qquad \forall v_h \in V_h. 
\end{equation}
We emphasize that no penalty or stabilization parameter is involved in
the scheme above. The broken norm is introduced on $V+V_h$:
\begin{equation} \label{eq:nondiv-norm} 
\|v\|_{0,h}^2 := \sum_{K \in \mathcal{T}_h} \|D^2 v\|_{L^2(K)}^2 \quad
\forall v \in V + V_h.
\end{equation} 
Note that $\|v\|_{0,h} = 0$ implies that $D^2v|_K = 0$ on each element
$K$. Together with the $C^1$-continuity on $(n-2)$-dimensional
subsimplex, we immediately have that $v$ is a linear polynomial on
$\Omega$, which means $v\equiv 0$ since $v$ vanishes on
$\partial\Omega$. The following coercivity result follows directly
from the discrete Miranda-Talenti-type estimate in Lemma \ref{lm:Hermite-MT}.

\begin{lemma} \label{lm:nondiv-MT} 
There holds that 
\begin{equation} \label{eq:nondiv-coercivity}
B_{0,h}(v_h, v_h) \geq (1-\sqrt{1-\varepsilon})\|v_h\|_{0,h}^2 \qquad
\forall v_h\in V_h.
\end{equation}
\end{lemma}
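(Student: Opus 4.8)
The plan is to combine the elementwise Cordes inequality of Lemma~\ref{lm:nondiv-Cordes-prop} with the discrete Miranda--Talenti identity~\eqref{eq:Hermite-MT}, and then to observe that the penalty weight $2-\sqrt{1-\varepsilon}$ in~\eqref{eq:nondiv-bilinear} has been chosen precisely so that the leftover cross terms close up into a perfect square. Throughout, I would abbreviate $s:=\|v_h\|_{0,h}$, $R:=\sum_{K\in\mathcal{T}_h}\|\Delta v_h\|_{L^2(K)}^2$, $\Theta:=\sum_{F\in\mathcal{F}_h^i}\langle\llbracket\nabla v_h\rrbracket,\Delta_T v_h\rangle_F$, and $t:=\sqrt{1-\varepsilon}\in[0,1)$, so that the discrete Miranda--Talenti identity reads $R=s^2+2\Theta$ and the penalty in $B_{0,h}$ is $-(2-t)\Theta$.

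First I would treat the volume part. On each $K\in\mathcal{T}_h$, split $\gamma Lv_h=\Delta v_h+(\gamma Lv_h-\Delta v_h)$; since $v_h|_K$ is a polynomial, the pointwise (algebraic) bound $|\gamma Lv_h-\Delta v_h|\le t\,|D^2v_h|$ from Lemma~\ref{lm:nondiv-Cordes-prop} applies a.e.\ on $K$, and Cauchy--Schwarz gives
\[
(\gamma Lv_h,\Delta v_h)_K\ \ge\ \|\Delta v_h\|_{L^2(K)}^2-t\,\|D^2v_h\|_{L^2(K)}\,\|\Delta v_h\|_{L^2(K)}.
\]
Summing over $K$ and applying the discrete Cauchy--Schwarz inequality to $\sum_K\|D^2v_h\|_{L^2(K)}\|\Delta v_h\|_{L^2(K)}\le s\sqrt{R}$ yields $\sum_{K}(\gamma Lv_h,\Delta v_h)_K\ge R-t\,s\sqrt{R}$. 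Substituting this and $R=s^2+2\Theta$ into the definition~\eqref{eq:nondiv-bilinear}, the $2\Theta$ contained in $R$ partially cancels the penalty $-(2-t)\Theta$, and after collecting terms one is left with
\[
B_{0,h}(v_h,v_h)\ \ge\ \Bigl(1-\tfrac{t}{2}\Bigr)s^2+\tfrac{t}{2}R-t\,s\sqrt{R}.
\]

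Finally, since $R\ge0$ I would set $r:=\sqrt{R}$ and note that the right-hand side equals $(1-t)s^2+\tfrac{t}{2}(r-s)^2\ge(1-t)s^2$, which is exactly the claimed bound~\eqref{eq:nondiv-coercivity}. I expect the only genuinely delicate point to be this last bookkeeping: one must track the constant $2-\sqrt{1-\varepsilon}$ carefully and recognize that it is the unique penalty weight for which the residual is the nonnegative square $\tfrac{t}{2}(\sqrt{R}-s)^2$ — a smaller weight would leave an indefinite cross term and destroy coercivity, while a larger one would push the coercivity constant below $1-\sqrt{1-\varepsilon}$. Everything else is routine Cauchy--Schwarz and reorganization.
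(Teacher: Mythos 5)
Your proof is correct and follows essentially the same route as the paper: both split $\gamma Lv_h=\Delta v_h+(\gamma Lv_h-\Delta v_h)$, invoke the pointwise Cordes bound of Lemma~\ref{lm:nondiv-Cordes-prop} elementwise, and close the argument with the discrete Miranda--Talenti identity~\eqref{eq:Hermite-MT}. The only (cosmetic) difference is that the paper applies Young's inequality elementwise to $\|D^2v_h\|_{L^2(K)}\|\Delta v_h\|_{L^2(K)}$ while you use a global Cauchy--Schwarz followed by completing the square, which yields the equivalent bound plus the nonnegative remainder $\tfrac{t}{2}(\sqrt{R}-s)^2$.
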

\begin{proof}
By using Lemma \ref{lm:nondiv-Cordes-prop} and Cauchy-Schwarz
inequality, we have 
\begin{equation} \label{eq:nondiv-coer2}
\begin{aligned}
B_{0,h}(v_h, v_h) 
&= \sum_{K \in \mathcal{T}_h} (\gamma Lv_h - \Delta v_h,
   \Delta v_h)_K + \sum_{K \in \mathcal{T}_h} \|\Delta v_h\|_{L^2(K)}^2 \\ 
&\qquad\qquad\qquad\qquad 
- (2-\sqrt{1-\varepsilon})\sum_{F\in \mathcal{F}_h^i} \langle
\llbracket\nabla v_h \rrbracket, \Delta_T v_h \rangle_F \\
(\mbox{by } \eqref{eq:nondiv-Cordes-prop})~~ 
& \geq \sum_{K\in \mathcal{T}_h}\|\Delta v_h\|_{L^2(K)}^2 -
\sqrt{1-\varepsilon}\sum_{K\in \mathcal{T}_h} \|D^2v_h\|_{L^2(K)}\|\Delta v_h\|_{L^2(K)}
\\
&\qquad\qquad\qquad\qquad 
-(2-\sqrt{1-\varepsilon}) \sum_{F\in \mathcal{F}_h^i} \langle 
  \llbracket \nabla v_h \rrbracket, \Delta_Tv_h \rangle_F \\
& \geq \sum_{K \in \mathcal{T}_h} \|\Delta v_h\|_{L^2(K)}^2 -
\frac{\sqrt{1-\varepsilon}}{2} \sum_{K\in\mathcal{T}_h} \big(\|D^2v_h\|_{L^2(K)}^2
+ \|\Delta v_h\|_{L^2(K)}^2 \big) \\
&\qquad\qquad\qquad\qquad 
- (2-\sqrt{1-\varepsilon}) \sum_{F\in \mathcal{F}_h^i} \langle
\llbracket \nabla v_h \rrbracket, \Delta_T v_h \rangle_F \\
& = (1-\frac{\sqrt{1-\varepsilon}}{2}) \Big( \sum_{K \in \mathcal{T}_h}\|\Delta
    v_h\|_{L^2(K)}^2 - 2\sum_{F\in \mathcal{F}_h^i}\langle
    \llbracket \nabla v_h \rrbracket, \Delta_T v_h \rangle_F \Big)
\\
&\qquad\qquad\qquad\qquad 
- \frac{\sqrt{1-\varepsilon}}{2} \sum_{K\in \mathcal{T}_h}\|D^2 v_h\|_{L^2(K)}^2 \\
(\mbox{recall } \eqref{eq:nondiv-norm})~~ & = (1-\sqrt{1-\varepsilon})\|v_h\|_{0,h}^2,
\end{aligned}
\end{equation}
where the discrete Miranda-Talenti-type estimate \eqref{eq:Hermite-MT} is
used in the last step.
\end{proof}

We note that the coercivity constant (namely, $1 - \sqrt{1-\varepsilon}$) under the
broken norm $\|\cdot\|_{0,h}$ is exactly the same as that for the PDE theory.  As a corollary, the uniqueness of the finite element method
\eqref{eq:nondiv-h} implies the existence, since equation \eqref{eq:nondiv} is linear.
Further, assume that the solution $u \in
H^2(\Omega)\cap H_0^1(\Omega)$, a straightforward argument shows the
consistency, namely 
\begin{equation} \label{eq:nondiv-consistency} 
B_{0,h}(u,v_h) = \sum_{K\in \mathcal{T}_h} (\gamma L u, \Delta v_h)_K =
\sum_{K \in \mathcal{T}_h} (\gamma f, \Delta v_h)_K 
\qquad \forall v_h \in V_h.
\end{equation}

\begin{remark} \label{rk:tilde-varepsilon}
We note that the bilinear form \eqref{eq:nondiv-bilinear} explicitly uses the constant $\varepsilon$ in Cordes condition \eqref{eq:nondiv-Cordes}. 
In case that the optimal value of $\varepsilon$ is not easy to compute, a simple modification of \eqref{eq:nondiv-bilinear} reads
$$ 
\tilde{B}_{0,h}(w, v_h) := \sum_{K\in \mathcal{T}_h}(\gamma Lw, \Delta v_h)_K -
(2-\sqrt{1-\tilde{\varepsilon}}) \sum_{F\in \mathcal{F}_h^i} \langle
\llbracket \nabla w\rrbracket, \Delta_T v_h\rangle_F,
$$
where $\tilde{\varepsilon}$ is an approximation of $\varepsilon$ that satisfies $\sqrt{1 - \tilde{\varepsilon}} + \frac{1-\varepsilon}{\sqrt{1 - \tilde{\varepsilon}}} < 2$.
Using the inequality 
$$ 
2\|D^2v_h\|_{L^2(K)}\|\Delta v_h\|_{L^2(K)} \leq \frac{\sqrt{1- \varepsilon}}{\sqrt{1- \tilde\varepsilon}}
\|D^2v_h\|_{L^2(K)}^2 + \frac{\sqrt{1- \tilde\varepsilon}}{\sqrt{1- \varepsilon}} \|\Delta v_h\|_{L^2(K)}^2 \quad \forall K \in \mathcal{T}_h,
$$ 
we have the coercivity result 
$$
\tilde{B}_{0,h}(v_h, v_h) \geq \big( 1 - \frac{\sqrt{1 - \tilde\varepsilon}}{2} - \frac{1-\varepsilon}{2\sqrt{1 - \tilde\varepsilon}} \big) \|v_h\|_{0,h}^2 \qquad \forall v_h \in V_h,
$$
following a similar argument as Lemma \ref{lm:nondiv-MT}. Clearly, the optimal coercivity constant is attained at $\tilde{\varepsilon} = \varepsilon$. Even if there is no any a priori estimate of $\varepsilon$, one may simply take $\tilde{\varepsilon} = 0$, which leads to the coercivity constant $\frac{\varepsilon}{2}$.
\end{remark}

\subsection{Error estimate}
Thanks to the coercivity result \eqref{eq:nondiv-coercivity} and the
consistency \eqref{eq:nondiv-consistency}, we then arrive at the
quasi-optimal error estimate. 
\begin{theorem} \label{tm:nondiv-estimate}
Let $\Omega$ be a bounded, convex polytope in $\mathbb{R}^n$, and let
$\mathcal{T}_h$ be a simplicial, conforming, shape-regular mesh.
Suppose that the coefficients satisfy the Cordes condition
\eqref{eq:nondiv-Cordes} and the solution to \eqref{eq:nondiv} $u \in
H^s(\Omega) \cap
H_0^1(\Omega)$ for some $s \geq 2$. Then, there holds 
\begin{equation} \label{eq:nondiv-estimate}
\|u - u_h\|_{0,h}^2:= \sum_{K \in \mathcal{T}_h} \|D^2(u - u_h)\|_{K}^2 \leq C
\sum_{K\in \mathcal{T}_h} h_K^{2t-4} \|u\|_{H^s(K)}^2,
\end{equation}
where $t = \min\{s, k+1\}$.
\end{theorem}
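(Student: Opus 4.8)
This is a standard energy-norm error estimate via Céa-type lemma plus interpolation, so I would proceed in three stages.

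First I would establish boundedness of the broken bilinear form $B_{0,h}$ on $(V+V_h)\times V_h$ in the norm $\|\cdot\|_{0,h}$. For the volume term $\sum_K(\gamma Lw,\Delta v_h)_K$, I bound $|\gamma Lw|\le|\Delta w|+\sqrt{1-\varepsilon}\,|D^2w|\lesssim|D^2w|$ pointwise using Lemma~\ref{lm:nondiv-Cordes-prop} and $|\Delta w|\le\sqrt n\,|D^2w|$, then apply Cauchy--Schwarz together with $\|\Delta v_h\|_{L^2(K)}\le\sqrt n\,\|D^2v_h\|_{L^2(K)}$. For the jump term $\sum_{F\in\mathcal F_h^i}\langle\llbracket\nabla w\rrbracket,\Delta_T v_h\rangle_F$ I need a trace-type bound: by a scaled trace inequality on each element, $\|\llbracket\nabla w\rrbracket\|_{L^2(F)}\lesssim h_F^{-1/2}\|D^2w\|_{L^2(\omega_F)}$ (for $w\in V+V_h$, using that $\nabla w$ is single-valued — indeed zero jump — for the $V$ part and inverse estimates for the $V_h$ part), and similarly $\|\Delta_T v_h\|_{L^2(F)}\lesssim h_F^{-1/2}\|D^2 v_h\|_{L^2(\omega_F)}$ via an inverse inequality since $\Delta_T v_h$ is a polynomial trace. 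Combining, $|B_{0,h}(w,v_h)|\lesssim\|w\|_{0,h}\|v_h\|_{0,h}$.

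Second, I would run the abstract argument. For any $v_h\in V_h$, set $e_h:=u_h-v_h\in V_h$. By coercivity (Lemma~\ref{lm:nondiv-MT}) and consistency \eqref{eq:nondiv-consistency},
\begin{equation*}
(1-\sqrt{1-\varepsilon})\|e_h\|_{0,h}^2\le B_{0,h}(e_h,e_h)=B_{0,h}(u_h-u,e_h)+B_{0,h}(u-v_h,e_h)=B_{0,h}(u-v_h,e_h),
\end{equation*}
using $B_{0,h}(u_h,e_h)=\sum_K(\gamma f,\Delta e_h)_K=B_{0,h}(u,e_h)$. By boundedness, $\|e_h\|_{0,h}\lesssim\|u-v_h\|_{0,h}$, and the triangle inequality gives $\|u-u_h\|_{0,h}\lesssim\inf_{v_h\in V_h}\|u-v_h\|_{0,h}$. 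Here one must be slightly careful that the jump-term bound in the boundedness estimate, applied with $w=u-v_h$, genuinely controls $\|\llbracket\nabla(u-v_h)\rrbracket\|_{L^2(F)}$ by $h_F^{-1/2}\|D^2(u-v_h)\|_{L^2(\omega_F)}$ — this is where convexity of $\Omega$ plays no role, but shape-regularity and the polynomial structure do.

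Third, I would choose $v_h=I_h u$, the canonical $V_h$-interpolant (Hermite in 2D, Argyris in 3D), and invoke standard finite element interpolation theory: since $u\in H^s(K)$ with $s\ge2$ and the local spaces contain $\mathcal P_k$, $\|D^2(u-I_h u)\|_{L^2(K)}\lesssim h_K^{t-2}\|u\|_{H^t(K)}\le h_K^{t-2}\|u\|_{H^s(K)}$ with $t=\min\{s,k+1\}$ — the $h_K$ powers appear as $h_K^{2t-4}$ after squaring, matching \eqref{eq:nondiv-estimate}. Summing over $K$ and using the finite overlap of the patches $\omega_F$ closes the estimate. The main obstacle is the boundedness step: one must check that the scaled trace and inverse inequalities for $\llbracket\nabla w\rrbracket$ and $\Delta_T v_h$ hold uniformly on shape-regular meshes and that the patches $\omega_F$ have bounded overlap, so that the face sums do not lose a power of $h$ relative to the element sums; once boundedness is in hand, the rest is the routine Céa/Strang machinery.
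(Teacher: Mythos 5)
Your overall architecture (coercivity from Lemma~\ref{lm:nondiv-MT} $+$ consistency \eqref{eq:nondiv-consistency} $+$ interpolation, assembled as a C\'ea-type argument) is exactly the paper's, and your identity $B_{0,h}(e_h,e_h)=B_{0,h}(u-v_h,e_h)$ together with the interpolation rates is correct. The gap is in the boundedness step, and it is not cosmetic: you claim $\|\llbracket\nabla w\rrbracket\|_{L^2(F)}\lesssim h_F^{-1/2}\|D^2w\|_{L^2(\omega_F)}$ and pair it with $\|\Delta_T v_h\|_{L^2(F)}\lesssim h_F^{-1/2}\|D^2v_h\|_{L^2(\omega_F)}$. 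The product then carries a factor $h_F^{-1}$, so what you have actually justified is $|B_{0,h}(w,v_h)|\lesssim h^{-1}\|w\|_{0,h}\|v_h\|_{0,h}$, which ruins the C\'ea constant and costs one full power of $h$ in \eqref{eq:nondiv-estimate}. The exponent on the jump must be $+1/2$, not $-1/2$: the correct statement is $\|\llbracket\nabla w_h\rrbracket\|_{L^2(F)}\lesssim h_F^{1/2}\|D^2 w_h\|_{L^2(\omega_F)}$ for $w_h\in V_h$, and it does not follow from a bare inverse/trace estimate (that only controls the jump by $h_F^{-1/2}\|\nabla w_h\|_{L^2(\omega_F)}$). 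One needs a Bramble--Hilbert/quotient argument on the patch $\omega_F$, using that the jump annihilates functions whose restriction to $\omega_F$ is a single linear polynomial --- and it is precisely the $C^0$-continuity on the face plus the $C^1$-continuity on the $(n-2)$-dimensional subsimplex that forces a piecewise linear in $V_h|_{\omega_F}$ to be globally linear. This is the scaling the paper uses when proving Lipschitz continuity in Lemma~\ref{lm:HJB-Lipschitz}.

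The paper sidesteps abstract boundedness altogether in Theorem~\ref{tm:nondiv-estimate}: since $u\in H^2(\Omega)$ gives $\llbracket\nabla u\rrbracket=0$, one writes $\llbracket\nabla z_h\rrbracket=\llbracket\nabla(z_h-u)\rrbracket$ and invokes the trace approximation estimate \eqref{eq:nondiv-app2}, i.e.\ $\|\llbracket\nabla(u-z_h)\rrbracket\|_{L^2(F)}\lesssim h_K^{t-3/2}\|u\|_{H^s(\omega_K)}$, which pairs with $\|\Delta_T\psi_h\|_{L^2(F)}\lesssim h_K^{-1/2}\|D^2\psi_h\|_{L^2(K)}$ to yield the optimal $h_K^{t-2}$ directly. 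Either fix (the corrected $h_F^{1/2}$ jump estimate, or the direct trace-approximation bound) closes your argument. A minor additional caveat: for $s=2$ the canonical Hermite/Argyris interpolant is not defined on $H^s$ (point values of derivatives are required), so $z_h$ should be taken as a quasi-interpolant satisfying \eqref{eq:nondiv-app1}--\eqref{eq:nondiv-app2}, as the paper implicitly does.
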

\begin{proof}
Since the sequence of meshes is shape regular, it follows from the
standard polynomial approximation theory \cite{brenner2007mathematical} that, there exists a $z_h \in
V_h$, such that
\begin{subequations}
\begin{align} 
\|u - z_h\|_{H^q(K)} &\leq C h_K^{t-q}\|u\|_{H^s(\omega_K)}, \quad 0 \leq q \leq 2,
  \label{eq:nondiv-app1}\\
\|D^\beta(u - z_h)\|_{L^2(\partial K)} &\leq C h_K^{t-q-1/2}
\|u\|_{H^s(\omega_K)} \quad \forall |\beta| = q, \quad 0 \leq q \leq 1, \label{eq:nondiv-app2}
\end{align}
\end{subequations}
where $\omega_K$ represents the union of the local neighborhood of element $K$. 
Let $\psi_h = z_h - u_h$. Then, by the coercivity result
\eqref{eq:nondiv-coercivity}, we obtain  
\begin{equation} \label{eq:nondiv-error}
\begin{aligned}
\|z_h - u_h\|_{0,h}^2 &\lesssim B_{0,h}(z_h - u_h, \psi_h) =
B_{0,h}(z_h, \psi_h) - \sum_{K\in \mathcal{T}_h}(\gamma f, \Delta \psi_h)_K  \\
&= \underbrace{\sum_{K \in \mathcal{T}_h}(\gamma L  (z_h - u), \Delta
\psi_h)_K}_{E_1} 
- 
(2 - \sqrt{1-\varepsilon}) \underbrace{\sum_{F\in \mathcal{F}_h^i} \langle 
\llbracket \nabla z_h \rrbracket, \Delta_T \psi_h \rangle_F}_{E_2}.
\end{aligned}
\end{equation}

By the boundedness of the data, the fact that
$\|\Delta\psi_h\|_{L^2(K)} \leq \sqrt{n} \|D^2
\psi_h\|_{L^2(K)}$ and the approximation result
\eqref{eq:nondiv-app1}, we have 
$$ 
\begin{aligned}
|E_1| &\leq  
\sum_{K \in \mathcal{T}_h} 
\|\gamma L (u-z_h)\|_{L^2(K)}  \|\Delta \psi_h\|_{L^2(K)} \\ 
&\leq \sum_{K \in \mathcal{T}_h} 
\sqrt{n} \|\gamma\|_{L^\infty(K)} \|A\|_{L^\infty(K)}\|D^2(u -
    z_h)\|_{L^2(K)}\|D^2\psi_h\|_{L^2(K)} \\
& \lesssim \Big( 
  \sum_{K \in \mathcal{T}_h} h_K^{2t-4}\|u\|_{H^s(K)}^2 
  \Big)^{1/2} \|\psi_h\|_{0,h}.
\end{aligned}
$$ 
Further, the local trace inequality implies that $\|\Delta_T \psi_h
\|_{L^2(\partial K)} \lesssim h_K^{-1/2}\|D^2 \psi_h\|_{L^2(K)}$,
together with the approximation result \eqref{eq:nondiv-app2}, we have  
$$ 
\begin{aligned}
|E_2| & =   
\left| \sum_{F \in \mathcal{F}_h} 
\langle\llbracket \nabla u - \nabla z_h \rrbracket, \Delta_T \psi_h
\rangle_F \right| \leq \sum_{F \in \mathcal{F}_h^i} \|\llbracket \nabla
(u-z_h)\rrbracket \|_{L^2(F)} \|\Delta_T \psi_h\|_{L^2(F)} \\
& \lesssim \Big( 
  \sum_{K \in \mathcal{T}_h} h_K^{2t-4}\|u\|_{H^s(K)}^2 
  \Big)^{1/2} \|\psi_h\|_{0,h}.
\end{aligned}
$$ 
The above inequalities and \eqref{eq:nondiv-error} give rise to 
$$ 
\|z_h - u_h\|_{0,h} \leq C \Big( \sum_{K\in \mathcal{T}_h} h_K^{2t-4}
\|u\|_{H^s(K)}^2 \Big)^{1/2},
$$ 
which implies the desired result by triangle inequality. 
\end{proof}

\section{Applications to the Hamilton-Jacobi-Bellman equations} 
\label{sec:HJB}
In this section, we apply the $C^0$ (non-Lagrange) finite element method to solve the
HJB equations \eqref{eq:HJB}, which can be viewed as a natural extension of the numerical scheme for the linear elliptic equations in non-divergence form. 
Since an additional parameter $\lambda >
0$ is introduced in the Cordes condition \eqref{eq:HJB-Cordes1}, the
broken norm is defined as 
\begin{equation} \label{eq:HJB-norm}
\|v\|_{\lambda, h}^2 := \sum_{K\in \mathcal{T}_h} \|v\|_{\lambda, h,
K}^2 := \sum_{K \in \mathcal{T}_h} \big(\|D^2 v\|_{L^2(K)}^2 +
2\lambda \|\nabla v\|_{L^2(K)}^2 + \lambda^2 \|v\|_{L^2(K)}^2\big).
\end{equation}
Thanks to the discussion of $\|\cdot\|_{0,h}$ in
\eqref{eq:nondiv-norm}, it is readily seen that $\|\cdot\|_{\lambda, h}$
is indeed a norm on $V + V_h$ for all $\lambda \geq 0$.

\subsection{Numerical scheme}
We describe the finite element method. In light of
\eqref{eq:HJB-M}, we define the operator $M_h: V+V_h \to V_h^*$
by 
\begin{equation} \label{eq:HJB-form}
\langle M_h[w], v_h \rangle := \sum_{K \in \mathcal{T}_h} (F_\gamma[w],
L_\lambda v_h)_K - (2-\sqrt{1-\varepsilon})\sum_{F\in \mathcal{F}_h^i}
\langle \llbracket \nabla w \rrbracket, \Delta_T v_h - \lambda
v_h\rangle_F,
\end{equation} 
where we recall that $L_\lambda v = \Delta v - \lambda v$ in
\eqref{eq:L-lambda}.  The following finite element method is proposed
to approximate the solution to the HJB equations  \eqref{eq:HJB}: Find
$u_h \in V_h$ such that 
\begin{equation} \label{eq:HJB-h}
\langle M_h[u_h], v_h \rangle = 0 \qquad \forall v_h \in V_h. 
\end{equation}

\begin{lemma} \label{lm:HJB-monotone}
For every $w_h, v_h \in V_h$, we have 
\begin{equation} \label{eq:HJB-monotone} 
\langle M_h[w_h] - M_h[v_h], w_h-v_h \rangle 
\geq (1-\sqrt{1-\varepsilon})\|w_h - v_h\|_{\lambda, h}^2.
\end{equation} 
\end{lemma}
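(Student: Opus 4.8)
The plan is to follow the structure of the proof of Lemma~\ref{lm:nondiv-MT}, replacing the discrete Miranda-Talenti-type estimate \eqref{eq:Hermite-MT} by a $\lambda$-weighted analog tailored to the operator $L_\lambda$. The first step is to prove that for every $v_h \in V_h$,
\[
\sum_{K\in\mathcal{T}_h}\|L_\lambda v_h\|_{L^2(K)}^2 = \|v_h\|_{\lambda,h}^2 + 2\sum_{F\in\mathcal{F}_h^i}\big\langle \llbracket\nabla v_h\rrbracket,\, \Delta_T v_h - \lambda v_h\big\rangle_F .
\]
Indeed, expanding $\|L_\lambda v_h\|_{L^2(K)}^2 = \|\Delta v_h\|_{L^2(K)}^2 - 2\lambda(\Delta v_h, v_h)_K + \lambda^2\|v_h\|_{L^2(K)}^2$ and integrating by parts once on each $K$ gives $(\Delta v_h, v_h)_K = -\|\nabla v_h\|_{L^2(K)}^2 + \langle \partial_{\bm n}v_h, v_h\rangle_{\partial K}$; summing over $\mathcal{T}_h$ and applying Lemma~\ref{lm:Hermite-MT} to the $\|\Delta v_h\|^2$ terms yields the identity, because $v_h \in H_0^1(\Omega)$ is single-valued across interior faces and vanishes on $\partial\Omega$, so $\sum_K\langle\partial_{\bm n}v_h, v_h\rangle_{\partial K}$ collapses to $\sum_{F\in\mathcal{F}_h^i}\langle\llbracket\nabla v_h\rrbracket, v_h\rangle_F$.

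Next, set $z_h := w_h - v_h \in V_h$. Using the linearity of $L_\lambda$ and of the jump operator,
\[
\langle M_h[w_h]-M_h[v_h], z_h\rangle = \sum_{K\in\mathcal{T}_h}\big(F_\gamma[w_h]-F_\gamma[v_h],\, L_\lambda z_h\big)_K - (2-\sqrt{1-\varepsilon})\sum_{F\in\mathcal{F}_h^i}\big\langle \llbracket\nabla z_h\rrbracket,\, \Delta_T z_h - \lambda z_h\big\rangle_F .
\]
Writing $F_\gamma[w_h]-F_\gamma[v_h] = \big(F_\gamma[w_h]-F_\gamma[v_h]-L_\lambda z_h\big) + L_\lambda z_h$, the second piece contributes $\sum_K\|L_\lambda z_h\|_{L^2(K)}^2$, while Lemma~\ref{lm:HJB-Cordes-prop} gives the pointwise bound $|F_\gamma[w_h]-F_\gamma[v_h]-L_\lambda z_h| \le \sqrt{1-\varepsilon}\,\big(|D^2 z_h|^2 + 2\lambda|\nabla z_h|^2 + \lambda^2|z_h|^2\big)^{1/2}$, hence $\|F_\gamma[w_h]-F_\gamma[v_h]-L_\lambda z_h\|_{L^2(K)} \le \sqrt{1-\varepsilon}\,\|z_h\|_{\lambda,h,K}$. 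Element-wise Cauchy-Schwarz followed by Young's inequality then bounds the first piece from below by $-\tfrac{\sqrt{1-\varepsilon}}{2}\big(\|z_h\|_{\lambda,h}^2 + \sum_K\|L_\lambda z_h\|_{L^2(K)}^2\big)$.

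Collecting the two contributions,
\[
\langle M_h[w_h]-M_h[v_h], z_h\rangle \ge \Big(1-\tfrac{\sqrt{1-\varepsilon}}{2}\Big)\sum_{K}\|L_\lambda z_h\|_{L^2(K)}^2 - \tfrac{\sqrt{1-\varepsilon}}{2}\|z_h\|_{\lambda,h}^2 - (2-\sqrt{1-\varepsilon})\sum_{F\in\mathcal{F}_h^i}\big\langle \llbracket\nabla z_h\rrbracket,\, \Delta_T z_h - \lambda z_h\big\rangle_F .
\]
Substituting the $\lambda$-weighted identity from the first step and using $2\big(1-\tfrac{\sqrt{1-\varepsilon}}{2}\big) = 2-\sqrt{1-\varepsilon}$, the face terms cancel exactly, leaving $\big(1-\tfrac{\sqrt{1-\varepsilon}}{2}\big)\|z_h\|_{\lambda,h}^2 - \tfrac{\sqrt{1-\varepsilon}}{2}\|z_h\|_{\lambda,h}^2 = (1-\sqrt{1-\varepsilon})\|z_h\|_{\lambda,h}^2$, which is the claim. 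The essential new ingredient beyond Lemma~\ref{lm:nondiv-MT} is the derivation, with the correct sign, of the $-\lambda v_h$ term in the $\lambda$-weighted discrete Miranda-Talenti identity, so that the penalty term in \eqref{eq:HJB-form} matches it precisely; once that identity is in hand, the remaining steps are exactly the same Young-inequality and term-matching manipulations as in the linear case, and the only point requiring care is the verification (inherited from Lemma~\ref{lm:Hermite-MT}) that the boundary-face contributions vanish, which uses simultaneously the $C^0$ ($H_0^1$) continuity, the $C^1$ continuity on the $(n-2)$-dimensional subsimplex, and the homogeneous boundary condition.
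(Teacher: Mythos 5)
Your proposal is correct and follows essentially the same route as the paper: you first establish the $\lambda$-weighted discrete Miranda--Talenti identity $\sum_{K}\|L_\lambda z_h\|_{L^2(K)}^2 = \|z_h\|_{\lambda,h}^2 + 2\sum_{F\in\mathcal{F}_h^i}\langle \llbracket\nabla z_h\rrbracket, \Delta_T z_h - \lambda z_h\rangle_F$ (the paper's \eqref{eq:Hermite-MT-lambda}), then apply Lemma~\ref{lm:HJB-Cordes-prop} with Cauchy--Schwarz and Young exactly as in Lemma~\ref{lm:nondiv-MT}. The decomposition, the constants, and the cancellation of the face terms all match the paper's argument.
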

\begin{proof}
Set $z_h = w_h - v_h$. Using the discrete Miranda-Talenti-type
estimate \eqref{eq:Hermite-MT} and integration by parts, we obtain 
\begin{equation} \label{eq:Hermite-MT-lambda}
\begin{aligned}
& \quad~ \sum_{K \in \mathcal{T}_h} \|L_\lambda z_h\|_{L^2(K)}^2 \\
&= 
\sum_{K\in \mathcal{T}_h}\|\Delta z_h\|_{L^2(K)}^2 - 2\lambda\sum_{K\in
\mathcal{T}_h} (z_h, \Delta z_h)_K + \lambda^2 \|z_h\|_{L^2(\Omega)}^2
\\
& = 
\sum_{K\in \mathcal{T}_h}\|\Delta z_h\|_{L^2(K)}^2 + 2\lambda \|\nabla
z_h\|_{L^2}^2 + \lambda^2 \|z_h\|_{L^2(\Omega)}^2 - 2\lambda\sum_{K\in
\mathcal{T}_h} \int_{\partial K} z_h \frac{\partial z_h}{\partial
\bm{n}} \,\mathrm{d}s \\
 & = \|z_h\|_{\lambda, h}^2 + 2 \sum_{F\in \mathcal{F}_h^i} \langle
\llbracket \nabla z_h \rrbracket, \Delta_Tz_h - \lambda z_h \rangle_F,
\end{aligned}
\end{equation}
where we use the definition of $\|\cdot\|_{\lambda, h}$
\eqref{eq:HJB-norm} in the last step. Further, by Lemma
\ref{lm:HJB-Cordes-prop}, we have 
$$ 
\begin{aligned}
& \quad~ \langle M_h[w_h] - M_h[v_h], z_h \rangle \\
&= \sum_{K\in \mathcal{T}_h} (F_\gamma[w_h] - F_\gamma[v_h] -
L_\lambda z_h, L_\lambda z_h)_K + \sum_{K \in \mathcal{T}_h} \|L_\lambda
z_h\|_{L^2(K)}^2 \\ 
& \qquad\qquad\qquad
- (2-\sqrt{1-\varepsilon})\sum_{F\in \mathcal{F}_h^i} \langle
\llbracket \nabla z_h \rrbracket, \Delta_T z_h - \lambda z_h \rangle_F \\
&\geq \sum_{K\in \mathcal{T}_h}\|L_\lambda z_h\|_{L^2(K)}^2 -
\sqrt{1-\varepsilon}  \sum_{K\in \mathcal{T}_h} \|z_h\|_{\lambda, h,K} \|L_\lambda
z_h\|_{L^2(K)}
\\
& \qquad\qquad\qquad 
- (2-\sqrt{1-\varepsilon})\sum_{F\in \mathcal{F}_h^i} \langle
\llbracket \nabla z_h \rrbracket, \Delta_T z_h -\lambda z_h\rangle_F
\\
&\geq \frac{2-\sqrt{1-\varepsilon}}{2}\big( 
\sum_{K\in \mathcal{T}_h} \|L_\lambda z_h\|_{L^2(K)}^2 - 2\sum_{F\in \mathcal{F}_h^i}
\langle \llbracket \nabla z_h \rrbracket, \Delta_T z_h -\lambda
z_h\rangle_F \big) \\
&\qquad\qquad\qquad 
- \frac{\sqrt{1-\varepsilon}}{2} \|z_h\|_{\lambda, h}^2.
\end{aligned}
$$ 
Applying \eqref{eq:Hermite-MT-lambda}, we conclude the strong
monotonicity of $M_h$ in \eqref{eq:HJB-monotone}. 
\end{proof}

Again, the monotonicity constant under the broken norm is exactly the
same as that for the PDE theory. Similar to the Remark \ref{rk:tilde-varepsilon}, the monotonicity constant becomes $1 - \frac{\sqrt{1 - \tilde\varepsilon}}{2} - \frac{1-\varepsilon}{2\sqrt{1 - \tilde\varepsilon}}$ if $\varepsilon$ is replaced by its apporixomation $\tilde\varepsilon$.
Next, we show that $M_h$ is Lipschitz
continuous on $V_h$ with respect
to $\|\cdot\|_{\lambda, h}$. 
\begin{lemma} \label{lm:HJB-Lipschitz}
For any $v_h, w_h, z_h \in V_h$,
\begin{equation} \label{eq:HJB-Lipschitz}
|\langle M_h[w_h] - M_h[v_h], z_h\rangle| \leq C
\|w_h-v_h\|_{\lambda,h} \|z_h\|_{\lambda,h}.
\end{equation}
\end{lemma}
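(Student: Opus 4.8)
The plan is to follow the template of the monotonicity proof (Lemma~\ref{lm:HJB-monotone}). Set $e_h := w_h - v_h$ and $g := F_\gamma[w_h] - F_\gamma[v_h]$, and split
\[
\langle M_h[w_h] - M_h[v_h], z_h\rangle = \sum_{K\in\mathcal{T}_h}(g, L_\lambda z_h)_K - (2-\sqrt{1-\varepsilon})\sum_{F\in\mathcal{F}_h^i}\langle\llbracket\nabla e_h\rrbracket, \Delta_T z_h - \lambda z_h\rangle_F,
\]
and then bound the volume sum and the face sum separately, in each case by $C\|e_h\|_{\lambda,h}\|z_h\|_{\lambda,h}$.

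For the volume sum, I would first record the elementary inequality $|F_\gamma[w_h] - F_\gamma[v_h]| \le \sup_{\alpha\in\Lambda}|\gamma^\alpha(L^\alpha w_h - L^\alpha v_h)| = \sup_{\alpha\in\Lambda}|\gamma^\alpha L^\alpha e_h|$ a.e. in $\Omega$ (the device $|\sup_\alpha a_\alpha - \sup_\alpha b_\alpha|\le\sup_\alpha|a_\alpha-b_\alpha|$ used for Lemma~\ref{lm:HJB-Cordes-prop}, with the $f^\alpha$ terms cancelling since $L^\alpha$ is linear). Since $A^\alpha$, $\bm{b}^\alpha$, $c^\alpha$ and hence $\gamma^\alpha$ are continuous on $\bar\Omega\times\Lambda$ and $\Lambda$ is compact, they are uniformly bounded, so Cauchy--Schwarz for the Frobenius product gives $|g| \le C(|D^2 e_h| + |\nabla e_h| + |e_h|)$ a.e., with $C$ depending on the data and on $\lambda$. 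Hence $\|g\|_{L^2(K)} \le C\|e_h\|_{\lambda,h,K}$ (absorbing the $\lambda$-powers that relate $|\nabla e_h|, |e_h|$ to the weighted norm), while $\|L_\lambda z_h\|_{L^2(K)} \le \sqrt{n}\,\|D^2 z_h\|_{L^2(K)} + \lambda\|z_h\|_{L^2(K)} \le C\|z_h\|_{\lambda,h,K}$; a Cauchy--Schwarz over $K\in\mathcal{T}_h$ then yields $|\sum_K (g,L_\lambda z_h)_K| \le C\|e_h\|_{\lambda,h}\|z_h\|_{\lambda,h}$.

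The face sum is the part that needs a genuine idea, and it uses the enhanced regularity of $V_h$ a second time. The key observation is that $\llbracket\nabla e_h\rrbracket = \partial_{\bm{n}^+}e_h^+ + \partial_{\bm{n}^-}e_h^-$, viewed as a polynomial on the flat face $F$, vanishes on $\partial F$: on the $(n-2)$-dimensional subsimplex $\partial F$ the space $V_h$ is $C^1$, so $\nabla e_h^+ = \nabla e_h^-$ there, and since $\bm{n}^+ = -\bm{n}^-$ on $F$ one gets $\llbracket\nabla e_h\rrbracket|_{\partial F} = \nabla e_h\cdot(\bm{n}^+ + \bm{n}^-) = 0$ --- exactly the cancellation used at the end of the proof of Lemma~\ref{lm:Hermite-MT}. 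Consequently the Poincar\'e--Friedrichs inequality on $F$ (applicable since $\llbracket\nabla e_h\rrbracket$ vanishes on $\partial F$; by a scaling argument the constant behaves like $h_F$, with shape-dependence controlled by shape regularity) gives $\|\llbracket\nabla e_h\rrbracket\|_{L^2(F)} \le C h_F\|\nabla_T\llbracket\nabla e_h\rrbracket\|_{L^2(F)}$, and since $\nabla_T\llbracket\nabla e_h\rrbracket$ is, up to a bounded factor, the trace on $F$ of entries of $D^2 e_h|_{K^\pm}$, the local trace inequality bounds this by $C h_F^{-1/2}\|D^2 e_h\|_{L^2(\omega_F)}$; altogether $\|\llbracket\nabla e_h\rrbracket\|_{L^2(F)} \le C h_F^{1/2}\|D^2 e_h\|_{L^2(\omega_F)}$. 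On the test side, $\|\Delta_T z_h - \lambda z_h\|_{L^2(F)} \le C\big(\|D^2 z_h\|_{L^2(F)} + \lambda\|z_h\|_{L^2(F)}\big) \le C h_F^{-1/2}\|z_h\|_{\lambda,h,\omega_F}$ by the local trace inequality again. The powers $h_F^{1/2}$ and $h_F^{-1/2}$ cancel, so each interior face contributes $\lesssim \|D^2 e_h\|_{L^2(\omega_F)}\|z_h\|_{\lambda,h,\omega_F}$, and a Cauchy--Schwarz over $\mathcal{F}_h^i$ together with shape regularity (each element lies in boundedly many $\omega_F$) gives $C\|e_h\|_{\lambda,h}\|z_h\|_{\lambda,h}$. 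Summing the two estimates proves \eqref{eq:HJB-Lipschitz}.

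I expect the face term to be the only real obstacle: a naive trace estimate of $\|\llbracket\nabla e_h\rrbracket\|_{L^2(F)}$ leaves an uncompensated negative power of $h$ (and hence an $h$-dependent constant), so the crux is to notice that this jump vanishes on $\partial F$ and to recover the missing $h^{1/2}$ via a Poincar\'e inequality on the face; the remaining steps are routine trace and Cauchy--Schwarz manipulations.
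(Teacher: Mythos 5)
Your proof is correct, and it reaches the two key estimates of the paper's proof by somewhat different mechanisms, so a comparison is worthwhile. For the volume part, the paper adds and subtracts $L_\lambda e_h$ and bounds the two resulting sums separately, invoking Lemma~\ref{lm:HJB-Cordes-prop} for the term $F_\gamma[w_h]-F_\gamma[v_h]-L_\lambda e_h$ and the elementary bound $\sum_K\|L_\lambda v\|_{L^2(K)}^2\le 2n\|v\|_{\lambda,h}^2$ for the other; you instead bound $|F_\gamma[w_h]-F_\gamma[v_h]|$ directly via $|\sup_\alpha a_\alpha-\sup_\alpha b_\alpha|\le\sup_\alpha|a_\alpha-b_\alpha|$ and the uniform boundedness of the continuous data on the compact set $\bar\Omega\times\Lambda$. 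Both are valid; yours is slightly more elementary (it does not use the Cordes structure at all, only compactness of $\Lambda$, which is exactly the ingredient the paper cites for Lipschitz continuity at the continuous level), at the price of a generic constant rather than the explicit $\sqrt{2n(1-\varepsilon)}+2n$. For the face part, both proofs hinge on the superapproximation-type estimate $\|\llbracket\nabla e_h\rrbracket\|_{L^2(F)}\lesssim h_F^{1/2}\|D^2e_h\|_{L^2(\omega_F)}$, and you correctly identify this as the only nontrivial obstacle. The paper obtains it by a quotient-space scaling argument: the jump functional vanishes on the kernel of the broken Hessian over $\omega_F$, because the $C^0$-continuity on $F$ and $C^1$-continuity on the $(n-2)$-subsimplex force a piecewise linear function on $\omega_F$ to be globally linear. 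You obtain it constructively: the same $C^1$-continuity on $\partial F$ makes the polynomial $\llbracket\nabla e_h\rrbracket$ vanish on $\partial F$ (the cancellation already exploited in Lemma~\ref{lm:Hermite-MT}), so a Poincar\'e--Friedrichs inequality on $F$ trades the jump for its tangential gradient and recovers the factor $h_F$, after which the discrete trace inequality finishes. Your route makes the role of the enhanced continuity more explicit and avoids the norm-equivalence/compactness step, while the paper's is shorter to state; the test-function side and the final Cauchy--Schwarz over faces are identical in both.
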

\begin{proof}
In light of the definition of $M_h$ in \eqref{eq:HJB-form}, using
Cauchy-Schwarz inequality, we have  
$$ 
\begin{aligned}
& \quad |\langle M_h[w_h] - M_h[v_h], z_h \rangle| \\
& \leq \underbrace{\sum_{K \in \mathcal{T}_h}\|F_\gamma[w_h] -
F_\gamma[v_h] - L_\lambda(w_h - v_h)\|_{L^2(K)} \|L_\lambda
  z_h\|_{L^2(K)}}_{I_1}
\\ 
& \quad 
+ \underbrace{\sum_{K\in \mathcal{T}_h} \|L_\lambda(w_h - v_h)\|_{L^2(K)}
  \|L_\lambda z_h\|_{L^2(K)}}_{I_2} \\
& \quad +
(2-\sqrt{1-\varepsilon}) \underbrace{\sum_{F\in
  \mathcal{F}_h^i}\|\llbracket \nabla(w_h-v_h) \rrbracket \|_{L^2(F)}
\Big( 
\|\Delta_T z_h\|_{L^2(F)} + \|\lambda z_h\|_{L^2(F)} 
\Big)}_{I_3}. 
\end{aligned}
$$ 
Revoking Lemma \ref{lm:HJB-Cordes-prop}, and the fact that
$\sum_{K\in \mathcal{T}_h} \|L_\lambda v\|_{L^2(\mathcal{T}_h)}^2 \leq 2n \|v\|_{\lambda,
h}^2$ for any $v\in V+V_h$, we have 
$$ 
I_1 \leq \sqrt{2n(1-\varepsilon)}\|w_h -
v_h\|_{\lambda,h}\|z_h\|_{\lambda,h}, \qquad 
I_2 \leq 2n\|w_h - v_h\|_{\lambda, h} \|z_h\|_{\lambda, h}.
$$ 
For any interior face $F = \partial K^+ \cap \partial K^-$, the
standard scaling argument \cite{ciarlet1978finite, brenner2007mathematical} gives 
$$ 
\|\llbracket \nabla(w_h - v_h) \rrbracket \|_{L^2(F)}^2 \lesssim
h_F \sum_{K \in \{K^+, K^-\}}\|D^2 (w_h - v_h)\|_{L^2(K)}^2,
$$ 
where the $C^0$-continuity at face and $C^1$-continuity at
$(n-2)$-dimensional subsimplex guarantee that the piecewise linear
function on $\omega_F  = K^+ \cup K^-$ has to be a linear function on the $\omega_F$. 
Further, By the local trace inequality, we have that for $F
\subset \partial K$ 
$$ 
\|\Delta_T z_h\|_{L^2(F)}^2 \lesssim h_F^{-1} \|D^2 z_h\|_{L^2(K)}, 
\qquad \|\lambda z_h\|_{L^2(F)}^2 \lesssim h_F^{-1}
\lambda^2 \|z_h\|_{L^2(K)}^2.
$$ 
Hence, we have $I_3 \lesssim \|w_h - v_h\|_{\lambda,h}
\|z_h\|_{\lambda,h}$. The bound \eqref{eq:HJB-Lipschitz} is obtained
from the above estimates of $I_i~(i=1,2,3)$.
\end{proof}
Having the strong monotonicity and the Lipschitz continuity, by the
Browder-Minty Theorem, there exists a unique solution $u_h \in V_h$ to
\eqref{eq:HJB-h}.  

\subsection{Error estimate}
The consistency of \eqref{eq:HJB-h} follows naturally since the term
$\sum_{F\in \mathcal{F}_h^i} \langle \llbracket \nabla u \rrbracket,
\Delta_T v_h - \lambda v_h\rangle_F$ vanishes for $u \in H^2(\Omega)
\cap H_0^1(\Omega)$.  Finally, we arrive at the quasi-optimal error
estimate. 

\begin{theorem} \label{tm:HJB-estimate}
Let $\Omega$ be a bounded, convex polytope in $\mathbb{R}^n$, and let
$\mathcal{T}_h$ be a simplicial, conforming, shape-regular mesh. Let
$\Lambda$ be a compact metric space.  Suppose that the coefficients
satisfy the Cordes condition \eqref{eq:HJB-Cordes}. Then, there exists
a unique solution $u_h \in V_h$ satisfying \eqref{eq:HJB-h}.
Moreover, there holds that 
\begin{equation} \label{eq:HJB-estimate}
\|u - u_h\|_{\lambda,h}^2 \leq C
\sum_{K\in \mathcal{T}_h} h_K^{2t-4} \|u\|_{H^s(K)}^2,
\end{equation}
where $t = \min\{s, k+1\}$ provided that $u \in H^s(\Omega) \cap
H_0^1(\Omega)$ for some $s \geq 2$.
\end{theorem}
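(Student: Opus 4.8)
The plan is to mimic the proof of Theorem \ref{tm:nondiv-estimate} almost verbatim, replacing the coercivity argument by the strong monotonicity of $M_h$ from Lemma \ref{lm:HJB-monotone} and using the Lipschitz continuity from Lemma \ref{lm:HJB-Lipschitz} to control the consistency error. Since existence and uniqueness of $u_h$ already follow from Lemmas \ref{lm:HJB-monotone}, \ref{lm:HJB-Lipschitz} and the Browder--Minty Theorem (as noted right after Lemma \ref{lm:HJB-Lipschitz}), only the error bound \eqref{eq:HJB-estimate} remains. First I would invoke the standard polynomial approximation theory to pick an interpolant $z_h \in V_h$ satisfying the same local estimates \eqref{eq:nondiv-app1}--\eqref{eq:nondiv-app2}, now for $0 \le q \le 2$ on the interior terms and $0 \le q \le 1$ on the trace terms, so that in particular $\|u - z_h\|_{\lambda,h} \lesssim (\sum_K h_K^{2t-4}\|u\|_{H^s(\omega_K)}^2)^{1/2}$, using $\lambda$ as a fixed $O(1)$ constant absorbed into $C$.

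Next, set $\psi_h := z_h - u_h \in V_h$. By strong monotonicity \eqref{eq:HJB-monotone},
\begin{equation*}
(1-\sqrt{1-\varepsilon}) \|\psi_h\|_{\lambda,h}^2 \le \langle M_h[z_h] - M_h[u_h], \psi_h\rangle = \langle M_h[z_h], \psi_h \rangle,
\end{equation*}
since $\langle M_h[u_h], \psi_h\rangle = 0$ by \eqref{eq:HJB-h}. Now I would exploit consistency: because $u \in H^2(\Omega)\cap H_0^1(\Omega)$ solves \eqref{eq:HJB-var}, the jump term $\sum_{F} \langle \llbracket \nabla u \rrbracket, \Delta_T v_h - \lambda v_h\rangle_F$ vanishes and $\langle M[u], \psi_h\rangle = 0$ coincides with $\langle M_h[u], \psi_h\rangle = 0$. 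Subtracting, $\langle M_h[z_h], \psi_h\rangle = \langle M_h[z_h] - M_h[u], \psi_h\rangle$, and then the Lipschitz-type estimate — more precisely the exact chain of bounds used in the proof of Lemma \ref{lm:HJB-Lipschitz}, applied with $w_h \rightsquigarrow z_h$, $v_h \rightsquigarrow u$, $z_h \rightsquigarrow \psi_h$ — gives
\begin{equation*}
|\langle M_h[z_h] - M_h[u], \psi_h\rangle| \le C \|z_h - u\|_{\lambda,h}\|\psi_h\|_{\lambda,h} + C\Big(\sum_{F\in\mathcal{F}_h^i} \|\llbracket \nabla(u - z_h)\rrbracket\|_{L^2(F)}^2\Big)^{1/2}\|\psi_h\|_{\lambda,h}.
\end{equation*}
Dividing through by $\|\psi_h\|_{\lambda,h}$ and applying \eqref{eq:nondiv-app1}--\eqref{eq:nondiv-app2} (the trace estimate controls the face-jump term by $(\sum_K h_K^{2t-4}\|u\|_{H^s(K)}^2)^{1/2}$ after a local trace inequality) yields $\|\psi_h\|_{\lambda,h} \le C(\sum_K h_K^{2t-4}\|u\|_{H^s(K)}^2)^{1/2}$; the triangle inequality $\|u - u_h\|_{\lambda,h} \le \|u - z_h\|_{\lambda,h} + \|\psi_h\|_{\lambda,h}$ then finishes the proof.

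The one subtlety, and the only place requiring care beyond the linear case, is that $M_h$ is nonlinear, so one cannot literally write $M_h[z_h] - M_h[u]$ as applied to $z_h - u$; instead the bound must go through the pointwise Cordes-type inequality of Lemma \ref{lm:HJB-Cordes-prop} exactly as in Lemma \ref{lm:HJB-Lipschitz}, splitting $F_\gamma[z_h] - F_\gamma[u]$ into $(F_\gamma[z_h] - F_\gamma[u] - L_\lambda(z_h - u)) + L_\lambda(z_h - u)$ and using $\|F_\gamma[z_h] - F_\gamma[u] - L_\lambda(z_h - u)\|_{L^2(K)} \le \sqrt{1-\varepsilon}\,\|z_h - u\|_{\lambda,h,K}$ together with $\|L_\lambda v\|_{L^2(\mathcal{T}_h)}^2 \le 2n\|v\|_{\lambda,h}^2$. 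This is mechanical once one notices $u - z_h$ need not lie in $V_h$ but the bounds in Lemma \ref{lm:HJB-Lipschitz} only used finite-element scaling on the discrete argument $\psi_h$ and the face jumps of $u - z_h$, both of which are available here via \eqref{eq:nondiv-app2} and local trace inequalities. Thus I expect no genuine obstacle — the main (modest) point to verify is that the consistency of \eqref{eq:HJB-h} holds for merely $H^2$ solutions, which is exactly the vanishing of the interior-penalty jump term already asserted at the start of this subsection.
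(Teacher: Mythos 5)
Your proposal is correct and follows essentially the same route as the paper: the paper also picks an interpolant $z_h$, invokes the strong monotonicity of $M_h$ and the consistency $\langle M_h[u_h],\psi_h\rangle=\langle M[u],\psi_h\rangle=0$, splits $\langle M_h[z_h]-M[u],\psi_h\rangle$ into the Cordes remainder, the $L_\lambda(z_h-u)$ term, and the face-jump term (using $\llbracket\nabla u\rrbracket=0$), and bounds each exactly as in Lemma \ref{lm:HJB-Lipschitz} before closing with the triangle inequality. The "subtlety" you flag about one argument being $u\in V$ rather than an element of $V_h$ is handled in the paper in precisely the way you describe.
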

\begin{proof}
Since the sequence of meshes is shape regular, it follows from the
standard polynomial approximation theory \cite{brenner2007mathematical} that, there exists a $z_h \in
V_h$, such that
\begin{subequations}
\begin{align} 
\|u - z_h\|_{H^q(K)} &\leq C h_K^{t-q}\|u\|_{H^s(\omega_K)}, \quad 0 \leq q \leq 2,
  \label{eq:HJB-app1}\\
\|D^\beta(u - z_h)\|_{L^2(\partial K)} &\leq C h_K^{t-q-1/2}
\|u\|_{H^s(\omega_K)} \quad \forall |\beta| = q, \quad 0 \leq q \leq 1. \label{eq:HJB-app2}
\end{align}
\end{subequations}
Let $\psi_h = z_h - u_h$. In light of the consistency, the strong
monotonicity of $M_h$ on $V_h$, as shown in Lemma
\ref{lm:HJB-monotone}, yields 
\begin{equation} \label{eq:HJB-error}
\begin{aligned}
\|\psi_h\|_{\lambda, h}^2 &\lesssim \langle M_h[z_h] - M_h[u_h], \psi_h
\rangle = \langle M_h[z_h] - M[u], \psi_h \rangle \\
&= \underbrace{\sum_{K \in \mathcal{T}_h} (F_\gamma[z_h] - F_\gamma[u] -
    L_\lambda(z_h - u), L_\lambda \psi_h)_K}_{E_1} \\ 
&\quad + \underbrace{\sum_{K \in \mathcal{T}_h}(L_\lambda(z_h - u),
    L_\lambda \psi_h)_K}_{E_2} \\
&\quad - (2-\sqrt{1-\varepsilon}) \underbrace{\sum_{F\in
\mathcal{F}_h^i} \langle \llbracket \nabla z_h \rrbracket, \Delta_T
  \psi_h - \lambda \psi_h \rangle_F}_{E_3}.
\end{aligned}
\end{equation} 
Similar to the proof of Lemma \ref{lm:HJB-Lipschitz}, we obtain 
$$ 
\begin{aligned}
|E_1| &\leq \sqrt{2n(1-\varepsilon)} \|u - z_h\|_{\lambda, h}
\|\psi_h\|_{\lambda, h} 
\lesssim 
\Big( \sum_{K \in \mathcal{T}_h} h_K^{2t-4}\|u\|_{H^s(K)}^2
\Big)^{1/2} \|\psi_h\|_{\lambda,h}, \\
|E_2| &\leq 2n \|u - z_h\|_{\lambda, h} \|\psi_h\|_{\lambda,h}
\lesssim 
\Big( \sum_{K \in \mathcal{T}_h} h_K^{2t-4}\|u\|_{H^s(K)}^2
\Big)^{1/2} \|\psi_h\|_{\lambda,h}.
\end{aligned}
$$ 
By \eqref{eq:HJB-app2} and the local trace inequality, we have   
$$ 
\begin{aligned}
|E_3| 
& = \Big|\sum_{F\in \mathcal{F}_h^i} \langle \llbracket \nabla
(u-z_h) \rrbracket, \Delta_T \psi_h - \lambda \psi_h\rangle_F \Big| \\
& \lesssim
\Big|\sum_{F\in \mathcal{F}_h^i} \|\llbracket \nabla
(u-z_h) \rrbracket\|_{L^2(F)} 
\big( 
\|\Delta_T \psi_h\|_{L^2(F)} + \lambda\|\psi_h\|_{L^2(F)} \big) \Big|
\\
& \lesssim 
\Big( \sum_{K \in \mathcal{T}_h} h_K^{2t-4}\|u\|_{H^s(K)}^2
\Big)^{1/2} \|\psi_h\|_{\lambda,h}.
\end{aligned}
$$ 
The above estimates of $E_i~(i=1,2,3) $ and \eqref{eq:HJB-error} yield 
$$ 
\|z_h - u_h\|_{\lambda,h} \leq C \Big( \sum_{K\in \mathcal{T}_h} h_K^{2t-4}
\|u\|_{H^s(K)}^2 \Big)^{1/2},
$$ 
which implies the desired result by triangle inequality. 
\end{proof}

\subsection{Semismooth Newton method}
We use the semismooth Newton method \cite{ulbrich2002semismooth} to
solve the discrete problem \eqref{eq:HJB-h}. We follow a similar
argument as \cite{smears2014discontinuous} in this subsection.  Since
transferring the proofs in \cite{smears2014discontinuous} to our
setting is straightforward, we only describe the algorithm and the
convergence result. 

Following the discussion in \cite{smears2014discontinuous}, we define
the set of admissible maximizers for any $v \in V + V_h$, 
\begin{equation} \label{eq:maximizer}
\Lambda[v] := 
\left\{ 
\parbox{5.2em}{
$g: \Omega \to \Lambda$ \\ 
measurable} 
\Bigg|~ 
\parbox{20em}{ 
$
\displaystyle g(x) \in
\mathop{\arg\max}_{\alpha\in\Lambda}(A^\alpha:D_h^2v + \bm{b}^\alpha
\cdot \nabla v - c^\alpha v - f^\alpha)$ \\
for almost every $x\in \Omega$
}
\right\},
\end{equation}
where $D_h^2 v$ denotes the broken Hessian of $v$.  As shown in
\cite[Lemma 9, Theorem 10]{smears2014discontinuous}, the set
$\Lambda[v]$ is nonempty for any $v \in V + V_h$, where a selection
theorem in \cite{kuratowski1965general} is applied. For any measurable
$g(x): \Omega \to \Lambda$, thanks to the uniform continuity of
$\gamma^\alpha$ defined in \eqref{eq:HJB-gamma} on $\Omega \times
\Lambda$, $\gamma^g := \gamma^\alpha|_{\alpha = g(x)}$ satisfies
$\gamma^{g}\in L^\infty(\Omega)$ and $\|\gamma^g\|_{L^\infty(\Omega)}
\leq \|\gamma^\alpha\|_{C(\bar{\Omega} \times \Lambda)}$. The
functions $A^g$, $\bm{b}^g$, $c^g$ and $f^g$ and the operator $L^g$
are defined in a similar way and are likewise bounded.  

The semismooth Newton algorithm for solving \eqref{eq:HJB-h} is
described as follows. 

\vspace{2mm}
\noindent {\bf Input:} Given initial guess $u_h^0 \in V_h$ and a
stopping criterion. 

\noindent {\bf for} $j = 0,1,2,\cdots$ {\bf until} termination {\bf
do}

Choose any $\alpha_j \in \Lambda[u_h^j]$ and compute compute
$u_h^{j+1} \in V_h$ as the solution to the linear problem 
\begin{equation} \label{eq:semismooth-algo}
B^{j}_{\lambda, h}(u_h^{j+1}, v_h) = \sum_{K \in \mathcal{T}_h} (
\gamma^{\alpha_j} f^{\alpha_j}, L_\lambda v_h)_K \qquad \forall v_h
\in V_h,   
\end{equation}
where the bilinear form $B^j_{\lambda, h}: V_h \times V_h \to
\mathbb{R}$ is defined by 
\begin{equation} \label{eq:semismooth-linear}
\begin{aligned}
B^j_{\lambda, h}(w_h, v_h) &:= \sum_{K\in \mathcal{T}_h}
(\gamma^{\alpha_j} L^{\alpha_j}w_h, L_\lambda v_h)_K \\
&\qquad 
- (2-\sqrt{1-\varepsilon})\sum_{F \in \mathcal{F}_h^i} \langle
\llbracket \nabla w_h \rrbracket, \Delta_T v_h - \lambda v_h
\rangle_{F}.
\end{aligned}
\vspace{-4mm}
\end{equation}
\noindent {\bf end do}

\begin{remark} \label{rk:lower-order}
We note here that \eqref{eq:semismooth-algo} is indeed a finite
element scheme for solving the linear elliptic equations in
non-divergence form with lower-order terms: 
$$ 
L^{\alpha_j} u^{j+1} := A^{\alpha_j}:D^2u^{j+1} +
\bm{b}^{\alpha_j}\cdot \nabla u^{j+1}
- c^{\alpha_j} u^{j+1} = f^{\alpha_j}
\quad \text{in }\Omega, \quad 
u^{j+1} = 0 \quad\text{on }\partial\Omega,
$$ 
where the coefficients, which is allowed to be discontinuous, satisfy
a similar Cordes condition as \eqref{eq:HJB-Cordes} with $\alpha =
\alpha_j$. Using very similar arguments as Lemma \ref{lm:HJB-monotone}
and Lemma \ref{lm:HJB-Lipschitz}, the coercivity and boundedness of
$B_{\lambda, h}^j$ regarding to $\|\cdot\|_{\lambda, h}$ can be
proved. A quasi-optimal error estimate then follows directly, which is
also confirmed numerically in subsection \ref{subsec:experiment2}.
\end{remark}

We state the main result as follows. 
\begin{theorem} \label{tm:semismooth-convergence}
Under the hypotheses of Theorem \ref{tm:HJB-estimate}, there exists a
constant $R > 0$ that may depend on $h$ as well as on the polynomial
degree, such that if $\|u_h - u_h^0\|_{\lambda, h} < R$, then the
sequence $\{u_h^j\}_{j=1}^\infty$ generated by the semismooth Newton
algorithm converges to $u_h$ with a superlinear convergence rate. 
\end{theorem}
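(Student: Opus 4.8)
The plan is to follow the standard semismooth Newton analysis, adapted from \cite{smears2014discontinuous}. The key is to recognize that the map $N_h: V_h \to V_h$ defined implicitly by $\langle M_h[w_h], v_h\rangle = \langle B^{j}_{\lambda,h}(\cdot,\cdot)\text{-residual}\rangle$ — more precisely, the iteration operator sending $u_h^j$ to $u_h^{j+1}$ — can be recast as a Newton step for the Lipschitz, slantly differentiable operator $M_h: V_h \to V_h^*$. First I would establish that $M_h$ is semismooth (Newton differentiable) on the finite-dimensional space $V_h$ with respect to $\|\cdot\|_{\lambda,h}$: since $V_h$ is finite-dimensional, all norms are equivalent, and the only source of nonsmoothness is the pointwise $\sup_{\alpha\in\Lambda}$ inside $F_\gamma$. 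Standard results on Nemytskii operators built from $\max$-type functions (see \cite{ulbrich2002semismooth} and \cite[Lemma 9, Theorem 10]{smears2014discontinuous}) give that $w_h \mapsto F_\gamma[w_h]$ is semismooth from $V_h$ into $L^2(\Omega)$, with a Newton derivative obtained by freezing a maximizer $\alpha \in \Lambda[w_h]$; composing with the bounded linear map $v_h \mapsto (\,\cdot\,, L_\lambda v_h)$ and adding the (linear, hence smooth) jump-penalty term preserves semismoothness. The generalized derivative of $M_h$ at $w_h$ in the direction determined by $\alpha_j \in \Lambda[w_h]$ is exactly the bilinear form $B^{j}_{\lambda,h}(\cdot,\cdot)$ from \eqref{eq:semismooth-linear}, so the iteration \eqref{eq:semismooth-algo}–\eqref{eq:semismooth-linear} is literally a semismooth Newton iteration $u_h^{j+1} = u_h^j - G_j^{-1} M_h[u_h^j]$ where $G_j$ is a Newton derivative.

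Next I would verify the two hypotheses of the abstract semismooth Newton convergence theorem \cite[Theorem 3.2 or similar]{ulbrich2002semismooth}: (i) uniform invertibility of the generalized derivatives $G_j$ with a uniform bound on $\|G_j^{-1}\|$, and (ii) the semismoothness (superlinear remainder) estimate. Point (i) follows from Remark \ref{rk:lower-order}: each $B^j_{\lambda,h}$ is coercive on $V_h$ with constant $1-\sqrt{1-\varepsilon}$ — independent of $j$ — by the same argument as Lemma \ref{lm:HJB-monotone} applied with the frozen coefficients $A^{\alpha_j}, \bm b^{\alpha_j}, c^{\alpha_j}$, which satisfy the pointwise Cordes bound \eqref{eq:HJB-Cordes1} for that fixed $\alpha_j$; combined with the uniform boundedness from Lemma \ref{lm:HJB-Lipschitz}-type arguments, the Lax–Milgram lemma gives $\|G_j^{-1}\|_{\mathcal L(V_h^*,V_h)} \le (1-\sqrt{1-\varepsilon})^{-1}$ uniformly in $j$. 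Point (ii) is the statement that $\|M_h[u_h^j] - M_h[u_h] - G_j(u_h^j - u_h)\|_{V_h^*} = o(\|u_h^j - u_h\|_{\lambda,h})$ as $u_h^j \to u_h$, which is precisely the definition of semismoothness of $M_h$ at $u_h$ established in the first step, using that the Newton derivative at $u_h^j$ evaluated on $u_h^j - u_h$ is what appears. With both ingredients in hand, the standard contraction argument shows there is $R>0$ (depending on $h$ and the polynomial degree through the norm-equivalence constants and the modulus of semismoothness on the finite-dimensional space) such that $\|u_h - u_h^0\|_{\lambda,h} < R$ forces $\|u_h - u_h^{j+1}\|_{\lambda,h} \le \tfrac12 \|u_h - u_h^{j}\|_{\lambda,h}$ and, more sharply, $\|u_h - u_h^{j+1}\|_{\lambda,h} = o(\|u_h - u_h^{j}\|_{\lambda,h})$, i.e. superlinear convergence.

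The main obstacle — or rather the only genuinely technical point — is the transfer of the semismoothness of the pointwise $\max$-operator to the operator $M_h$ in the correct functional-analytic setting, together with the measurable selection of maximizers. This is exactly where \cite{smears2014discontinuous} invokes the Kuratowski–Ryll-Nardzewski selection theorem \cite{kuratowski1965general} to guarantee $\Lambda[v]$ is nonempty and that a measurable maximizer $\alpha_j(x)$ exists, so that $B^j_{\lambda,h}$ is well-defined; and it is where one must check that the candidate Newton derivative (built by freezing $\alpha_j \in \Lambda[u_h^j]$) satisfies the uniform-remainder estimate. Since $V_h$ is finite-dimensional the arguments simplify considerably relative to the DG analysis — boundedness of the discrete Hessian on $V_h$ in $L^\infty$, equivalence of norms, and the Lipschitz bound \eqref{eq:HJB-Lipschitz} handle all the estimates — so I expect this step to go through verbatim as in \cite[Section 5]{smears2014discontinuous} with the DG penalty replaced by our interior-penalty term; for this reason it suffices, as the authors note, to state the algorithm and invoke the convergence result.
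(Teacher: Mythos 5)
Your proposal is correct and follows essentially the same route as the paper, which itself omits the proof and defers to \cite[Theorem~11]{smears2014discontinuous}: identify the iteration \eqref{eq:semismooth-algo} as a semismooth Newton step for $M_h$ with generalized derivative $B^j_{\lambda,h}$, obtain uniform invertibility from the coercivity of Remark~\ref{rk:lower-order}, and invoke the standard semismoothness of the pointwise supremum together with measurable selection to get local superlinear convergence. Your reconstruction, including the identification $G_j(u_h^{j+1}-u_h^j)=-M_h[u_h^j]$ and the finite-dimensional simplifications, matches the intended argument.
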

\begin{proof}
The proof is similar to \cite[Theorem 11]{smears2014discontinuous} and
is therefore omitted here. 
\end{proof}

\section{Numerical experiments} \label{sec:numerical}
In this section we present some numerical experiments of the $C^0$
(non-Lagrange) finite element methods for the linear elliptic
equations in non-divergence form \eqref{eq:nondiv} and the HJB
equations \eqref{eq:HJB}. For all the convergence order experiments,
the convergence history plots are logarithmically scaled.  

\subsection{First experiment}
In the first experiment, we consider the problem
\eqref{eq:nondiv} in two dimensions on the domain $\Omega = (-1,1)^2$.
The coefficient matrix is set to be 
\begin{equation} \label{eq:nondiv-test1} 
A = \begin{pmatrix} 
2 & \frac{x_1 x_2}{|x_1 x_2|} \\
\frac{x_1 x_2}{|x_1 x_2|} & 2 
\end{pmatrix}.
\end{equation} 
A straightforward calculation shows that, for the coefficient matrix
in \eqref{eq:nondiv-test1}, the Cordes condition
\eqref{eq:nondiv-Cordes} is satisfied with $\varepsilon = 3/5$. We
note here that the coefficient matrix is discontinuous across the set
$\{(x_1,x_2)\in \Omega: x_1=0 \text{ or } x_2=0\}$.  In order to test
the convergence order, the smooth solution 
\begin{equation} \label{eq:nondiv-test1-u}
u(x) = (x_1 \mathrm{e}^{1-|x_1|} - x_1)(x_2 \mathrm{e}^{1-|x_2|} - x_2)
\end{equation}
is considered from many works (e.g., \cite{smears2013discontinuous,
gallistl2017variational}).  The right hand side $f:= A:D^2u$ is
directly calculated from the coefficient matrix and solution. 

On a sequence of uniform triangulations $\{\mathcal{T}_h\}_{0<h<1}$,
we apply the numerical scheme \eqref{eq:nondiv-h} to the problem with
2D Hermite finite element spaces for polynomial degrees $k=3$ and
$k=4$.  After computing \eqref{eq:nondiv-h} for various $h$, we report
the errors in Figure \ref{fig:test1}. The expected optimal convergence
rate $\|D^2 u -D^2 u_h\|_{L^2(\mathcal{T}_h)} = \mathcal{O}(h^{k-1})$
is observed, which is in agreement with Theorem
\ref{tm:nondiv-estimate}.  Further, the experiments indicate that the
scheme converges with (sub-optimal) second order convergence in both
$H^1$ and $L^2$ when $k=3$. As for $k=4$, the $H^1$ error converges
with (optimal) fourth order, and the $L^2$ error converges with
(sub-optimal) fourth order. 

\begin{figure}[!htbp]
\centering 
\captionsetup{justification=centering}
\subfloat[Convergence rate, $k=3$]{
  \includegraphics[width=0.48\textwidth]{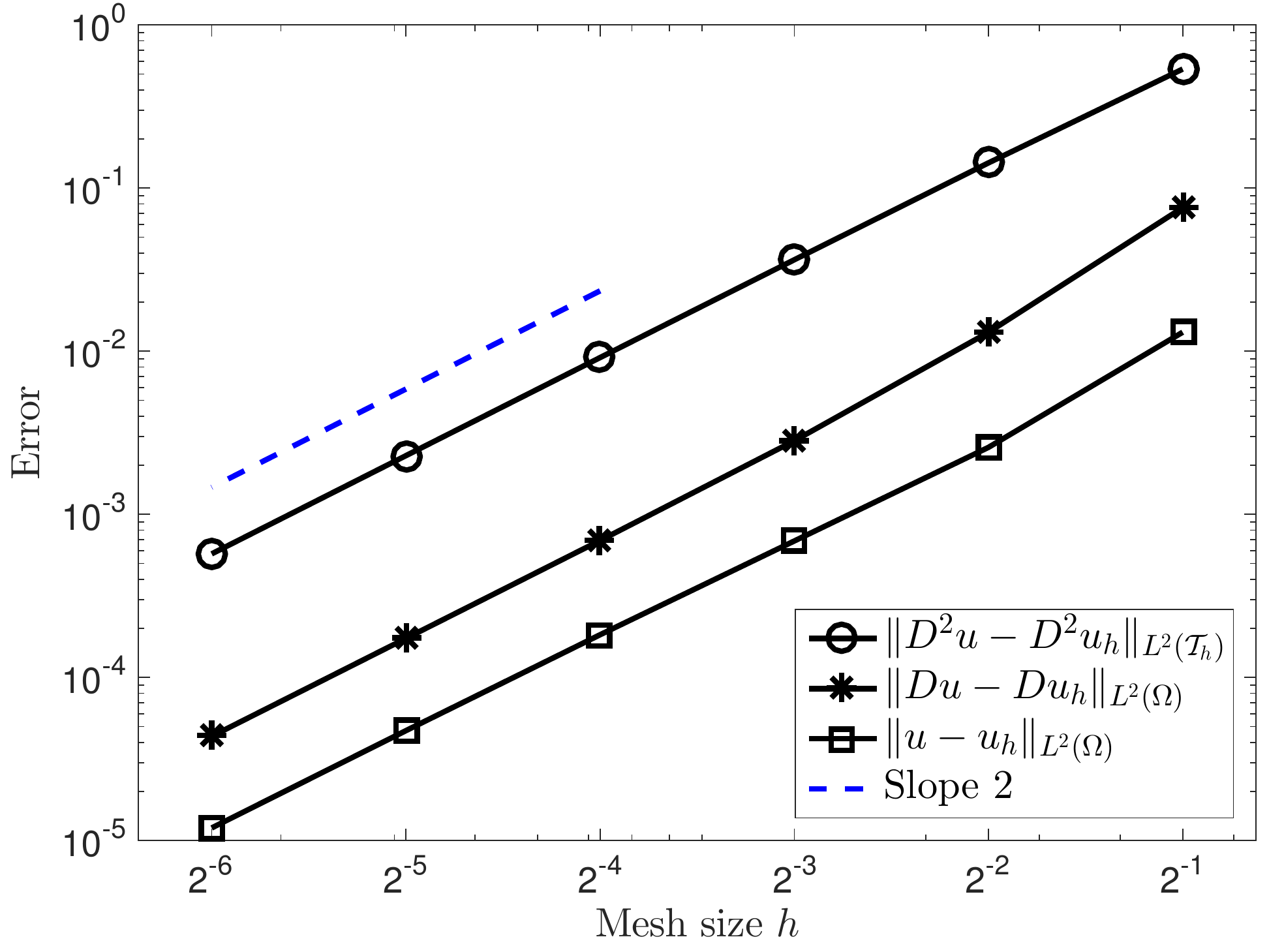}
  \label{fig:test1-k3}
}%
\subfloat[Convergence rate, $k=4$]{
  \includegraphics[width=0.48\textwidth]{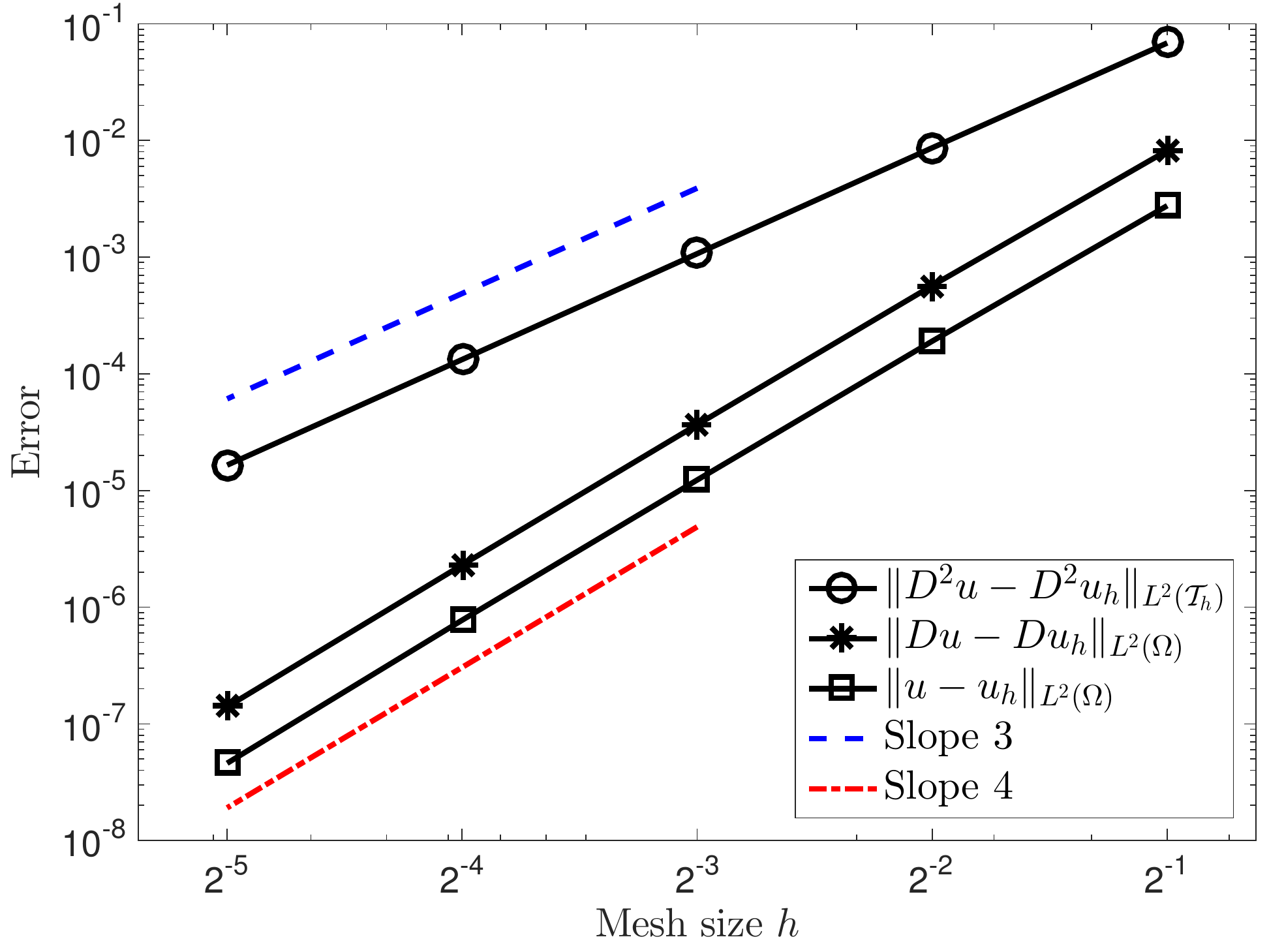}
  \label{fig:test1-k4}
} 
\caption{Convergence rate for the numerical scheme \eqref{eq:nondiv-h}
  applied to the linear elliptic equations in non-divergence form \eqref{eq:nondiv} for
    Experiment 1.}
\label{fig:test1}
\end{figure}

\subsection{Second experiment} \label{subsec:experiment2}
In this experiment, we consider the linear elliptic equations in non-divergence form with
lower-order terms on $\Omega = (-1,1)^2$: 
$$ 
A:D^2u + \bm{b}\cdot \nabla u - cu = f \quad \text{in }\Omega, \qquad 
u = 0 \quad\text{on }\partial\Omega.
$$ 
Here, $A$ is taken the same as \eqref{eq:nondiv-test1}, $\bm{b} =
(x_1, x_2)^T$, $c = 3$. By choosing $\lambda = 1$, we have 
$$ 
\frac{|A|^2 + |\bm{b}|^2/(2\lambda) + (c/\lambda)^2}{(\tr A +
    c/\alpha)^2} = \frac{19 + (x_1^2+x_2^2)/2}{49}\leq \frac{20}{49},
$$ 
which means that the Cordes condition holds for $\varepsilon = 9/20$
(see Remark \ref{rk:lower-order}).  The right hand side $f$ is chosen
so that the exact solution is \eqref{eq:nondiv-test1-u}. The scheme
converges with the optimal order $h^{k-1}$ in the broken $H^2$ norm,
as shown in Table \ref{tab:test2}.  The convergence orders in $H^1$
and $L^2$ norms are similar to the Experiment 1. 

\begin{table}[!htbp]
\centering 
\captionsetup{justification=centering}
\scriptsize
\begin{tabular}{cc|cc|cc|cc} 
\hline 
&$h$ & $\|u-u_h\|_{L^2(\Omega)}$ & Order & $|u -
u_h|_{H^1(\Omega)}$ & Order & $\|D^2u -
  D^2u\|_{L^2(\mathcal{T}_h)}$
& Order \\ \hline 
&$2^{-2}$ & 
1.72705E-03 & --- & 1.17301E-02 & --- & 1.41330E-01 & --- \\
&$2^{-3}$ &
4.10225E-04 & 2.07 & 2.33362E-03 & 2.33 & 3.59360E-02 & 1.98 \\
$k=3$ & $2^{-4}$ &
1.00457E-04 & 2.03 & 5.42524E-04 & 2.10 & 9.03321E-03 & 1.99 \\
&$2^{-5}$ &
2.49068E-05 & 2.01 & 1.33476E-04 & 2.02 & 2.26200E-03 & 2.00 \\
&$2^{-6}$ &
6.20697E-06 & 2.00 & 3.32792E-05 & 2.00 & 5.65735E-04 & 2.00 \\
\hline
&$2^{-1}$ & 
1.78055E-03 & --- & 6.63776E-03 & --- & 6.80847E-02 & --- \\  
&$2^{-2}$ &
1.21503E-04 & 3.87 & 4.62102E-04 & 3.84 & 8.63084E-03 & 2.98 \\
$k=4$ & $2^{-3}$ &
7.79999E-06 & 3.96 & 2.96137E-05 & 3.96 & 1.06983E-03 & 3.01 \\
&$2^{-4}$ &
4.88884E-07 & 4.00 & 1.85296E-06 & 4.00 & 1.32677E-04 & 3.01 \\
&$2^{-5}$ &
2.88593E-08 & 4.08 & 1.13437E-07 & 4.03 & 1.65056E-05 & 3.01 \\
\hline
\end{tabular}
\caption{Errors and observed convergence orders for Experiment 2.}
\label{tab:test2}
\end{table}

\subsection{Third experiment}
In this experiment, we solve the nonlinear HJB equations
\eqref{eq:HJB} in two dimensions on the domain $\Omega = (0,1)^2$.  
Following \cite{smears2014discontinuous}, we take $\Lambda = [0,
\pi/3] \times \mathrm{SO}(2)$, where $\mathrm{SO}(2)$ is the set of
$2\times 2$ rotation matrices. The coefficients are given by
$\bm{b}^\alpha = 0$, $c^\alpha = \pi^2$, and  
$$ 
A^\alpha = \frac{1}{2}\sigma^\alpha (\sigma^\alpha)^T, \qquad
\sigma^\alpha = R^T
\begin{pmatrix}
1 & \sin\theta \\
0 & \cos\theta
\end{pmatrix}, \qquad \alpha = (\theta, R) \in \Lambda.
$$ 
Since $\tr A^\alpha = 1$ and $|A^\alpha|^2 = (1+\sin^2\theta)/2 \leq
7/8$, the Cordes condition \eqref{eq:HJB-Cordes1} holds with
$\varepsilon = 1/7$ by taking $\lambda = 8\pi^2/7$.  We choose
$f^\alpha = \sqrt{3}\sin^2\theta / \pi^2 + g$, $g$ independent of
$\alpha$ so that the exact solution of the HJB equations
\eqref{eq:HJB} is $ u(x_1, x_2) = \exp(x_1x_2) \sin(\pi x_1) \sin(\pi
x_2)$.

On a sequence of uniform triangulations, we apply the numerical scheme
\eqref{eq:HJB-h} to the HJB equations \eqref{eq:HJB}. The finite element
spaces are defined by employing the 2D Hermite finite elements for
polynomial degrees $k=3$ and $k=4$. The plots of the errors given
in Figure \ref{fig:test3-k3-err} and \ref{fig:test3-k4-err} show that
the scheme converges with $\|u - u_h\|_{\lambda, h} =
\mathcal{O}(h^{k-1})$, which is in agreement with Theorem
\ref{tm:HJB-estimate}. The convergence orders in $H^1$ and $L^2$ norms
are similar to the Experiment 1 and Experiment 2. 

In the semismooth Newton algorithm, the initial guess is $u_h^0 = 0$,
and the stopping criterion is set to be $\|u_h^{j} - u_h^{j-1}\|_{\lambda, h} <
10^{-8}$. The convergence histories shown in Figure
\ref{fig:test3-k3-newton} and \ref{fig:test3-k3-newton} demonstrate
the fast convergence of the algorithm.  

\begin{figure}[!htbp]
\centering 
\captionsetup{justification=centering}
\subfloat[Convergence rate, $k=3$]{
  \includegraphics[width=0.48\textwidth]{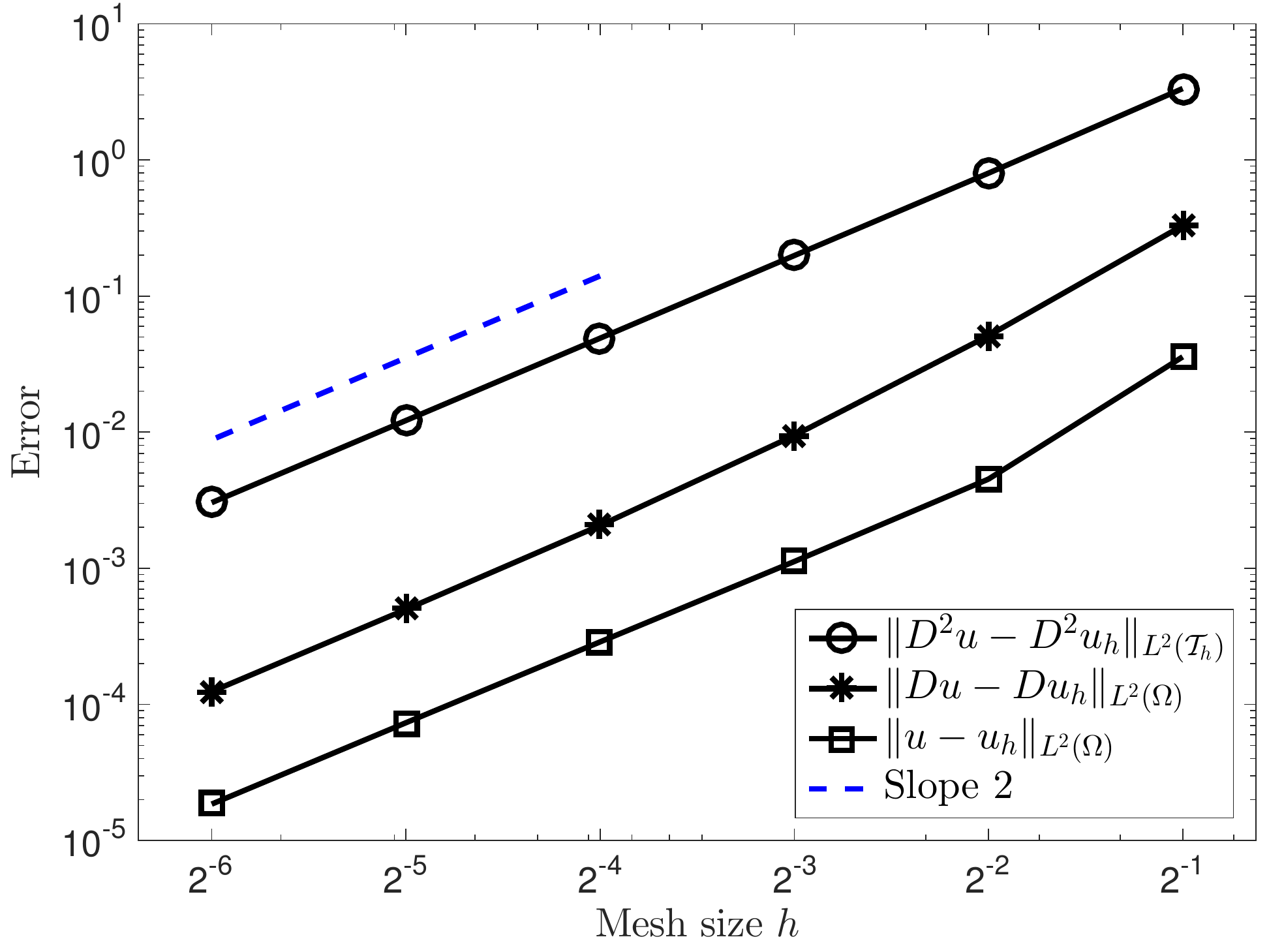}
  \label{fig:test3-k3-err}
}%
\subfloat[Semismooth Newton, $k=3$]{
  \includegraphics[width=0.48\textwidth]{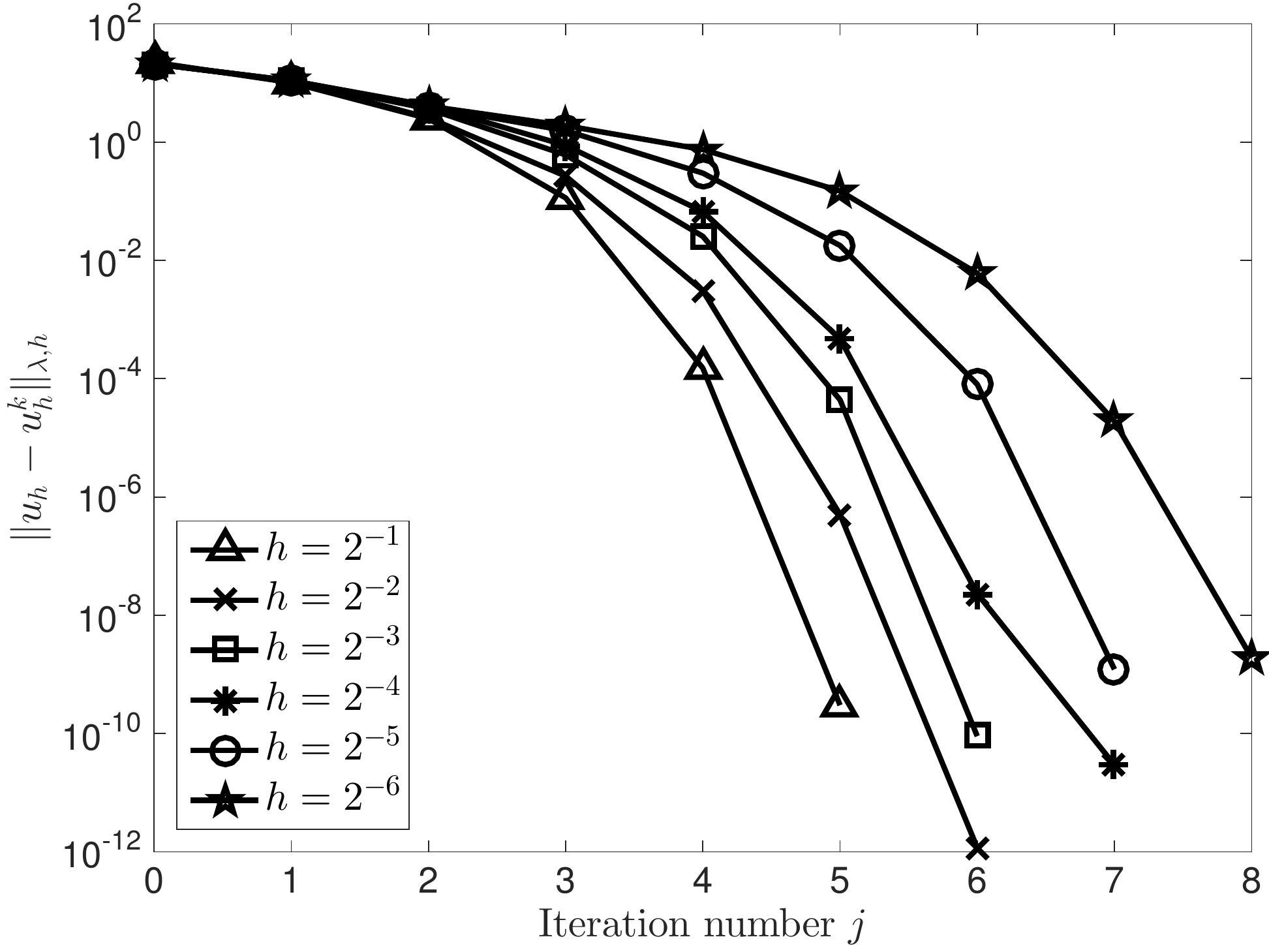}
  \label{fig:test3-k3-newton}
} \\
\subfloat[Convergence rate, $k=4$]{
  \includegraphics[width=0.48\textwidth]{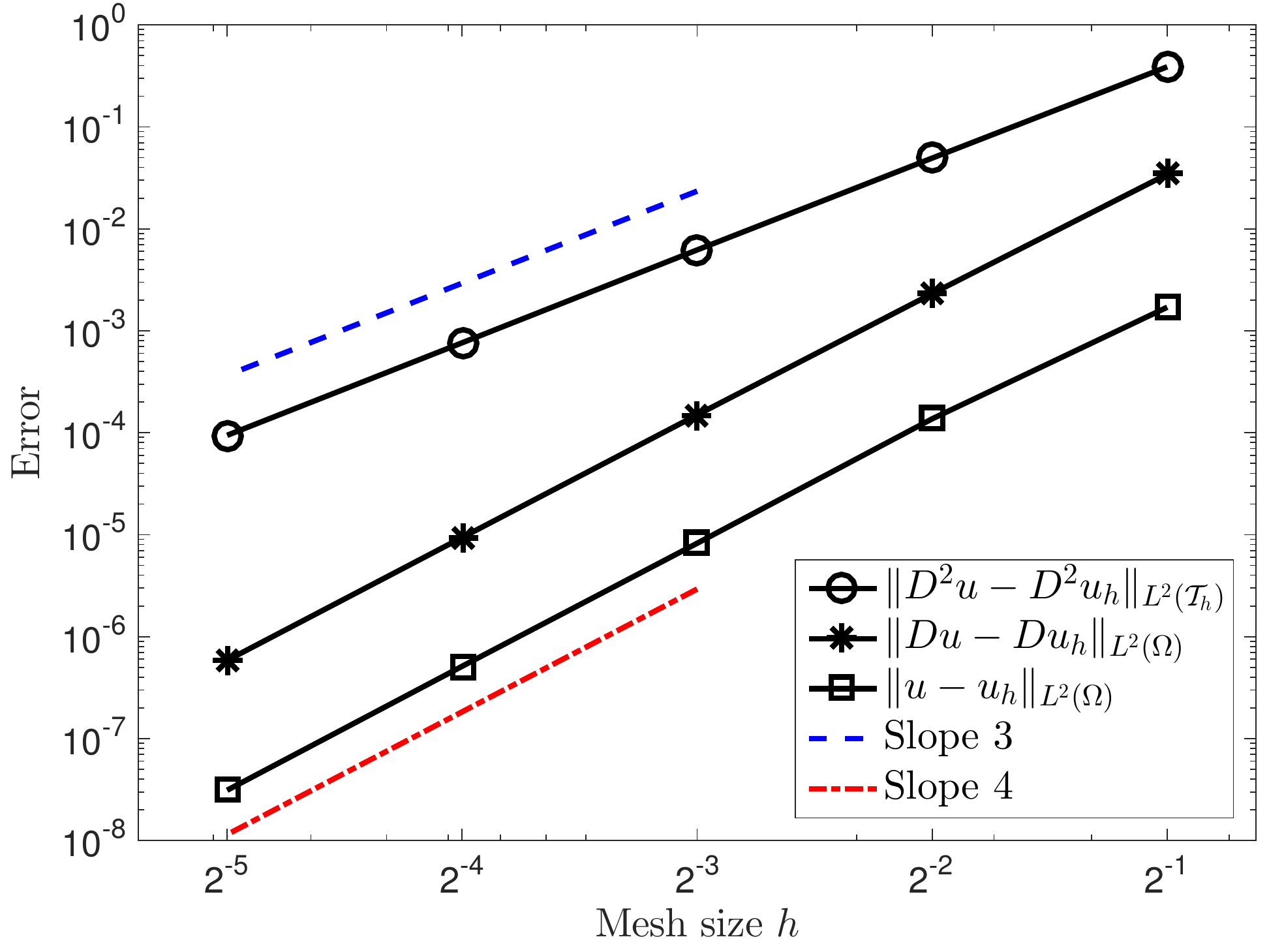}
  \label{fig:test3-k4-err}
} %
\subfloat[Semismooth Newton, $k=4$]{
  \includegraphics[width=0.48\textwidth]{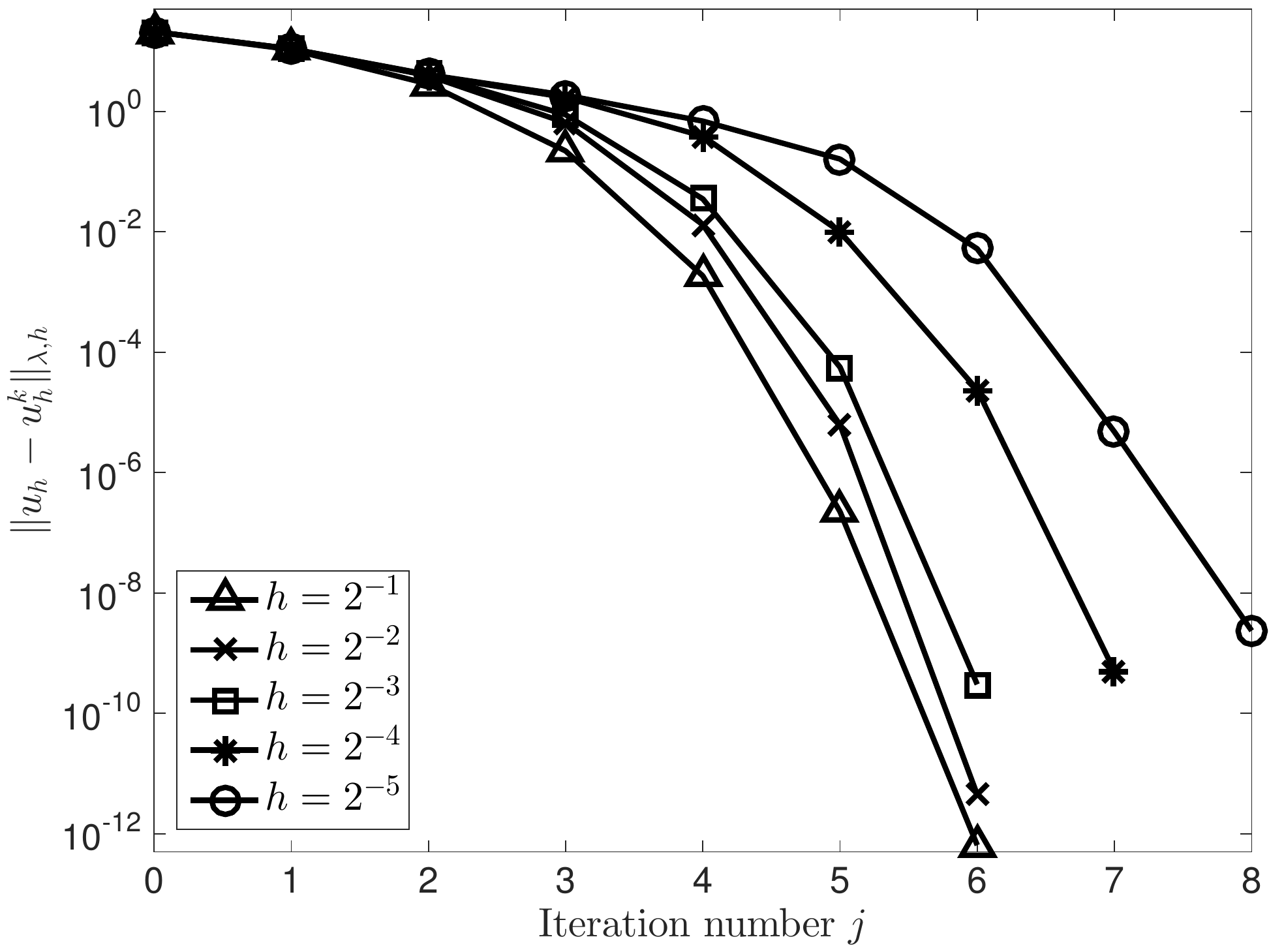}
  \label{fig:test3-k4-newton}
} 
\caption{Numerical scheme \eqref{eq:HJB-h} applied to the HJB equations
  \eqref{eq:HJB} for Experiment 3: Convergence rate and convergence
    histories of the semismooth Newton method.}
\label{fig:test3}
\end{figure}

\subsection{Fourth experiment}
In the last example, we consider a test case of the HJB equations
\eqref{eq:HJB} from \cite{smears2014discontinuous, gallistl2019mixed}
with near degenerate diffusion and a boundary layer in the solution.
Let $\Omega = (0,1)^2$, $\bm{b}^\alpha = (0,1)^T$, $c^\alpha = 10$,
and define 
$$ 
A^\alpha = \alpha^T
\begin{pmatrix}
20 & 1 \\
1 & 1/10
\end{pmatrix}
\alpha, \qquad \forall \alpha \in \Lambda:= \mathrm{SO}(2).
$$ 
For this choice of parameters and $\lambda = 1/2$, the Cordes
condition \eqref{eq:HJB-Cordes1} is satisfied for $\varepsilon =
0.0024$ (cf. \cite{smears2014discontinuous}). Let $\delta = 0.01$ and
$f^\alpha=A^\alpha:D^2u + \bm{b}^\alpha \cdot \nabla - c^\alpha u$,
where the exact solution is 
$$ 
u(x) = (2x_1 - 1)\big(\exp(1-|2x_1 - 1|) - 1 \big) \Big(x_2 +
    \frac{1-\exp(x_2/\delta)}{\exp(1/\delta) - 1} \Big).
$$ 
This solution exhibits a sharp boundary layer near the line
$\bar{\Omega} \cap \{x_2 = 1\}$. Following
\cite{smears2014discontinuous}, we use a sequence of graded bisection
meshes $\{\mathcal{T}_\ell\}_{\ell\in \mathbb{N}_0}$ with grading
factor $1/2$, see Figure \ref{fig:test4-mesh} for example.
More precisely, we mark the element $T$ whose $|T| >
C(x_{2,T}-1)^2/\#\mathcal{T}_\ell$, where $x_{2,T}$ is the second
component of barycenter of the element $T$, $\#\mathcal{T}_\ell$ is
the number of elements in $\mathcal{T}_\ell$. In the experiment, we
set $C = 120$, and use the 2D Hermite finite element spaces for
polynomial degrees $k=3$.  In Figure \ref{fig:test4-k3}, we plot the
errors in broken $H^2$-seminorm, $H^1$-seminorm and $L^2$ on the
graded bisection meshes, against the number of degrees of freedom
(ndof). We observe a reduction in the order of error
$\mathcal{O}(\text{ndof}^{-1})$, which confirms the second-order
convergence as shown in Theorem \ref{tm:HJB-estimate}. 

\begin{figure}[!htbp]
\centering 
\captionsetup{justification=centering}
\subfloat[Graded mesh at level 13: 3,587 vertices, 17,629 degrees of
freedom]{
  \includegraphics[width=0.43\textwidth]{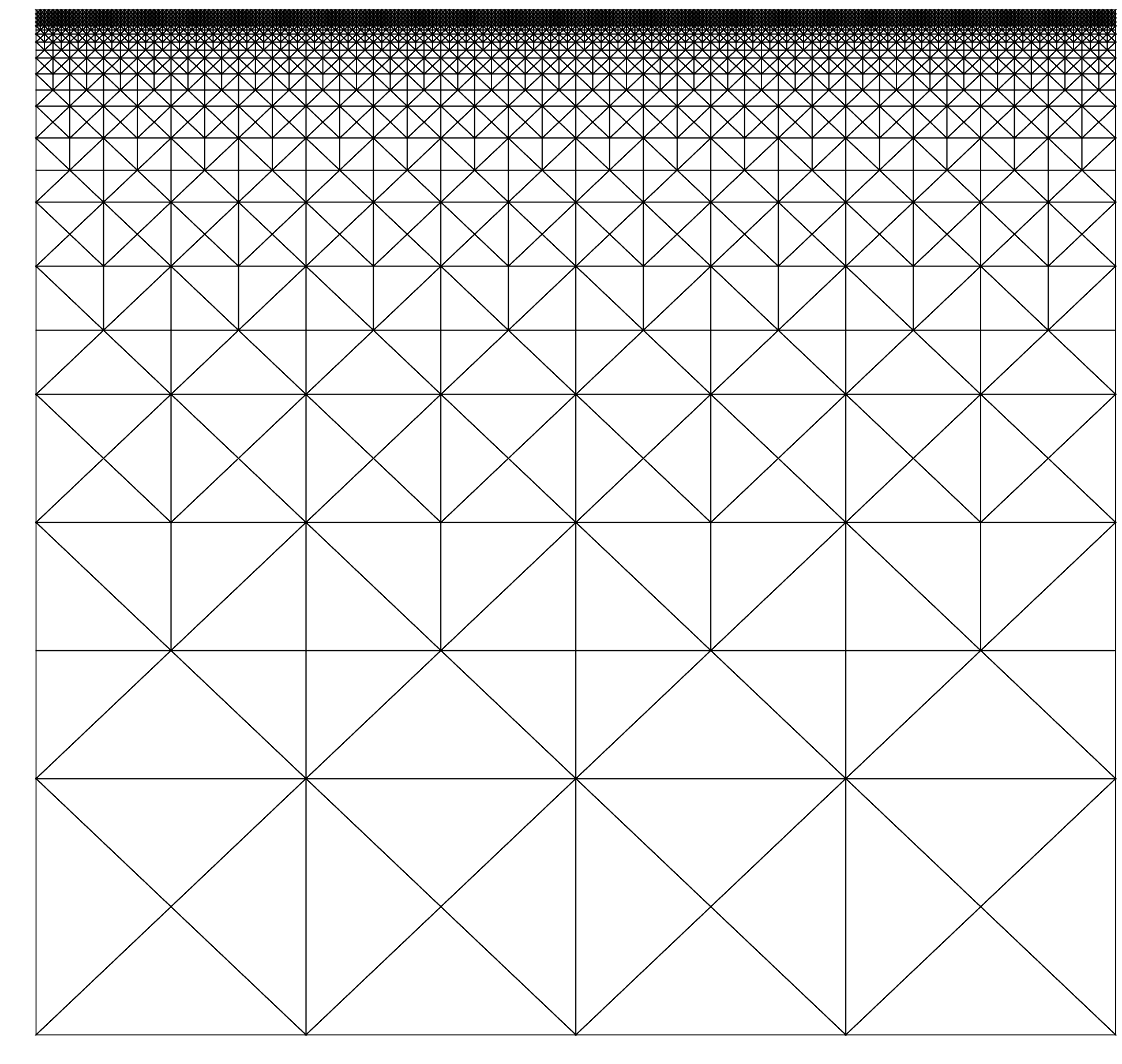}
  \label{fig:test4-mesh}
} %
\subfloat[Convergence rate, $k=3$]{
  \includegraphics[width=0.53\textwidth]{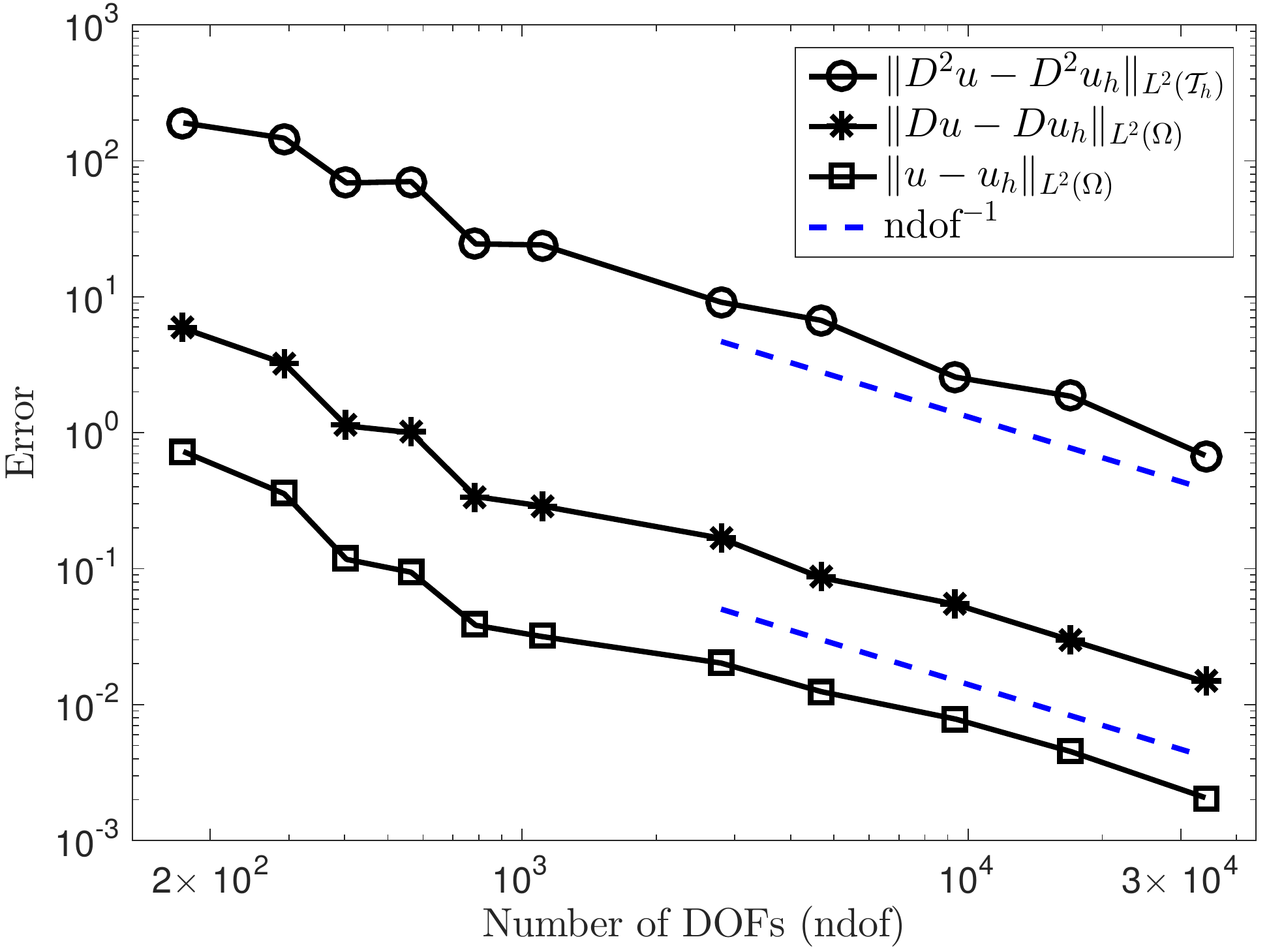}
  \label{fig:test4-k3}
} 
\caption{Convergence rate for the numerical scheme \eqref{eq:HJB-h}
  applied to the HJB equations \eqref{eq:HJB} for
    Experiment 4.}
\label{fig:test4}
\end{figure}

\section*{Acknowledgments}
The author would like to express his gratitude to Guangwei Gao and
Prof. Jun Hu in Peking University for their helpful discussions.

\bibliographystyle{siamplain}
\bibliography{Hermite.bib} 

\begin{thebibliography}{10}

\bibitem{barles1991convergence}
{\sc G.~Barles and P.~E. Souganidis}, {\em Convergence of approximation schemes
  for fully nonlinear second order equations}, Asymptotic Analysis, 4 (1991),
  pp.~271--283.

\bibitem{bonnans2003consistency}
{\sc J.~F. Bonnans and H.~Zidani}, {\em Consistency of generalized finite
  difference schemes for the stochastic {HJB} equation}, SIAM Journal on
  Numerical Analysis, 41 (2003), pp.~1008--1021.

\bibitem{brenner2007mathematical}
{\sc S.~Brenner and R.~Scott}, {\em The mathematical theory of finite element
  methods}, vol.~15, Springer Science \& Business Media, 2007.

\bibitem{caffarelli1997properties}
{\sc L.~A. Caffarelli and C.~E. Guti{\'e}rrez}, {\em Properties of the
  solutions of the linearized {Monge-Amp\`{e}re} equation}, American Journal of
  Mathematics, 119 (1997), pp.~423--465.

\bibitem{camilli2009finite}
{\sc F.~Camilli and E.~R. Jakobsen}, {\em A finite element like scheme for
  integro-partial differential {Hamilton-Jacobi-Bellman} equations}, SIAM
  Journal on Numerical Analysis, 47 (2009), pp.~2407--2431.

\bibitem{christiansen2018nodal}
{\sc S.~H. Christiansen, J.~Hu, and K.~Hu}, {\em Nodal finite element de {R}ham
  complexes}, Numerische Mathematik, 139 (2018), pp.~411--446.

\bibitem{ciarlet1978finite}
{\sc P.~G. Ciarlet}, {\em The finite element method for elliptic problems},
  North-Holland, 1978.

\bibitem{falk2013stokes}
{\sc R.~S. Falk and M.~Neilan}, {\em Stokes complexes and the construction of
  stable finite elements with pointwise mass conservation}, SIAM Journal on
  Numerical Analysis, 51 (2013), pp.~1308--1326.

\bibitem{feng2013recent}
{\sc X.~Feng, R.~Glowinski, and M.~Neilan}, {\em Recent developments in
  numerical methods for fully nonlinear second order partial differential
  equations}, SIAM Review, 55 (2013), pp.~205--267.

\bibitem{feng2017finite}
{\sc X.~Feng, L.~Hennings, and M.~Neilan}, {\em Finite element methods for
  second order linear elliptic partial differential equations in non-divergence
  form}, Mathematics of Computation, 86 (2017), pp.~2025--2051.

\bibitem{feng2019narrow}
{\sc X.~Feng and T.~Lewis}, {\em A narrow-stencil finite difference method for
  approximating viscosity solutions of fully nonlinear elliptic partial
  differential equations with applications to {Hamilton-Jacobi-Bellman}
  equations}, arXiv preprint arXiv:1907.10204,  (2019).

\bibitem{feng2018interior}
{\sc X.~Feng, M.~Neilan, and S.~Schnake}, {\em Interior penalty discontinuous
  {G}alerkin methods for second order linear non-divergence form elliptic
  {PDE}s}, Journal of Scientific Computing, 74 (2018), pp.~1651--1676.

\bibitem{fleming2006controlled}
{\sc W.~H. Fleming and H.~M. Soner}, {\em Controlled Markov processes and
  viscosity solutions}, vol.~25, Springer Science \& Business Media, 2006.

\bibitem{gallistl2017variational}
{\sc D.~Gallistl}, {\em Variational formulation and numerical analysis of
  linear elliptic equations in nondivergence form with {C}ordes coefficients},
  SIAM Journal on Numerical Analysis, 55 (2017), pp.~737--757.

\bibitem{gallistl2019mixed}
{\sc D.~Gallistl and E.~S\"{u}li}, {\em Mixed finite element approximation of
  the {H}amilton-{J}acobi-{B}ellman equation with {C}ordes coefficients}, SIAM
  Journal on Numerical Analysis, 57 (2019), pp.~592--614.

\bibitem{gilbarg2015elliptic}
{\sc D.~Gilbarg and N.~S. Trudinger}, {\em Elliptic partial differential
  equations of second order}, springer, 2015.

\bibitem{jensen2017finite}
{\sc M.~Jensen}, {\em {$L^2(H_\gamma^1)$} finite element convergence for
  degenerate isotropic {H}amilton-{J}acobi-{B}ellman equations}, IMA Journal of
  Numerical Analysis, 37 (2017), pp.~1300--1316.

\bibitem{jensen2013convergence}
{\sc M.~Jensen and I.~Smears}, {\em On the convergence of finite element
  methods for {H}amilton-{J}acobi-{B}ellman equations}, SIAM Journal on
  Numerical Analysis, 51 (2013), pp.~137--162.

\bibitem{kuratowski1965general}
{\sc K.~Kuratowski and C.~Ryll-Nardzewski}, {\em A general theorem on
  selectors}, Bull. Acad. Polon. Sci. S{\'e}r. Sci. Math. Astronom. Phys, 13
  (1965), pp.~397--403.

\bibitem{lakkis2011finite}
{\sc O.~Lakkis and T.~Pryer}, {\em A finite element method for second order
  nonvariational elliptic problems}, SIAM Journal on Scientific Computing, 33
  (2011), pp.~786--801.

\bibitem{li2019sequential}
{\sc R.~Li and F.~Yang}, {\em A sequential least squares method for elliptic
  equations in non-divergence form}, arXiv preprint arXiv:1906.03754,  (2019).

\bibitem{maugeri2000elliptic}
{\sc A.~Maugeri, D.~K. Palagachev, and L.~G. Softova}, {\em Elliptic and
  parabolic equations with discontinuous coefficients}, vol.~109, WILEY-VCH
  Verlag GmbH \& Co., 2000.

\bibitem{neilan2015discrete}
{\sc M.~Neilan}, {\em Discrete and conforming smooth de {R}ham complexes in
  three dimensions}, Mathematics of Computation, 84 (2015), pp.~2059--2081.

\bibitem{neilan2017numerical}
{\sc M.~Neilan, A.~J. Salgado, and W.~Zhang}, {\em Numerical analysis of
  strongly nonlinear {PDEs}}, Acta Numerica, 26 (2017), pp.~137--303.

\bibitem{neilan2019discrete}
{\sc M.~Neilan and M.~Wu}, {\em Discrete {M}iranda-{T}alenti estimates and
  applications to linear and nonlinear {PDE}s}, Journal of Computational and
  Applied Mathematics, 356 (2019), pp.~358--376.

\bibitem{nochetto2018discrete}
{\sc R.~H. Nochetto and W.~Zhang}, {\em Discrete {ABP} estimate and convergence
  rates for linear elliptic equations in non-divergence form}, Foundations of
  Computational Mathematics, 18 (2018), pp.~537--593.

\bibitem{qiu2019adaptive}
{\sc W.~Qiu and S.~Zhang}, {\em Adaptive first-order system least-squares
  finite element methods for second order elliptic equations in non-divergence
  form}, arXiv preprint arXiv:1906.11436,  (2019).

\bibitem{renardy2006introduction}
{\sc M.~Renardy and R.~C. Rogers}, {\em An introduction to partial differential
  equations}, vol.~13, Springer Science \& Business Media, 2006.

\bibitem{smears2013discontinuous}
{\sc I.~Smears and E.~S\"{u}li}, {\em Discontinuous {G}alerkin finite element
  approximation of nondivergence form elliptic equations with {C}ordes
  coefficients}, SIAM Journal on Numerical Analysis, 51 (2013), pp.~2088--2106.

\bibitem{smears2014discontinuous}
{\sc I.~Smears and E.~S\"{u}li}, {\em Discontinuous {G}alerkin finite element
  approximation of {H}amilton-{J}acobi-{B}ellman equations with {C}ordes
  coefficients}, SIAM Journal on Numerical Analysis, 52 (2014), pp.~993--1016.

\bibitem{smears2016discontinuous}
{\sc I.~Smears and E.~S{\"u}li}, {\em Discontinuous {G}alerkin finite element
  methods for time-dependent {H}amilton-{J}acobi-{B}ellman equations with
  {C}ordes coefficients}, Numerische Mathematik, 133 (2016), pp.~141--176.

\bibitem{ulbrich2002semismooth}
{\sc M.~Ulbrich}, {\em Semismooth {Newton} methods for operator equations in
  function spaces}, SIAM Journal on Optimization, 13 (2002), pp.~805--841.

\bibitem{wang2018primal}
{\sc C.~Wang and J.~Wang}, {\em A primal-dual weak {G}alerkin finite element
  method for second order elliptic equations in non-divergence form},
  Mathematics of Computation, 87 (2018), pp.~515--545.

\end{thebibliography}
\end{document}